\newcommand{\rad}{\mathop{\mathrm{rad}}\nolimits}
\newcommand{\inv}{^{-1}}
\newcommand{\ov}[1]{\ensuremath{\overline {#1}}}
\newcommand{\til}[1]{\ensuremath{\widetilde {#1}}}
\newcommand{\wh}{\widehat}
\newcommand{\Tor}{\mathop{\mathrm{Tor}}\nolimits}
\newtheorem{Thm}{Theorem}[section]
\newtheorem{Prop}[Thm]{Proposition}
\newtheorem{Lemma}[Thm]{Lemma}
{\theoremstyle{definition}
}
{\theoremstyle{remark}
\newtheorem{Rmk}[Thm]{Remark}}
\newtheorem{Cor}[Thm]{Corollary}
{\theoremstyle{remark}
}
{\theoremstyle{remark}
\newtheorem{Example}[Thm]{Example}}
\theoremstyle{remark}
\theoremstyle{remark}
\theoremstyle{remark}
\theoremstyle{remark}
\newtheorem*{Claim*}{Claim}}
\newtheorem*{theorema}{Theorem A}
\numberwithin{equation}{section}
\title[Quiver presentations of band algebras]{Quiver presentations for band algebras are defined over the integers}
\author{Benjamin Steinberg}
\address[B.~Steinberg]{%
    Department of Mathematics\\
    City College of New York\\
    Convent Avenue at 138th Street\\
    New York, New York 10031\\
    USA}
\email{bsteinberg@ccny.cuny.edu}
\thanks{The author was supported by the NSF grant DMS-2452324, a Simons Foundation Collaboration Grant, award number 849561, the Australian Research Council Grant DP230103184, and Marsden Fund Grant MFP-VUW2411.}
\date{\today}
\keywords{band, quiver, path algebra, bound quiver}
\subjclass[2020]{20M25,20M30, 20M50}
\begin{document}

\begin{abstract}
A band is a semigroup in which each element is idempotent.
In recent years, there has been a lot of activity on the representation theory of the subclass of left regular bands due to connections to Markov chains associated to hyperplane arrangements, oriented matroids, matroids and CAT(0) cube complexes.  We prove here that the integral semigroup algebra of a band is isomorphic to the integral path algebra of a quiver modulo an admissible ideal.  This leads to a uniform bound quiver presentation for band algebras over all fields.  Also, we answer a question of Margolis, Saliola and Steinberg by proving that the integral semigroup algebra of a CW left regular band is isomorphic to the quotient of the integral path algebra of the Hasse diagram of its support semilattice modulo the ideal generated by the sum of all paths of length two.  This includes, for example, hyperplane face semigroup algebras.
\end{abstract}
\maketitle

\section{Introduction}
Bidigare, Hanlon and Rockmore~\cite{BHR} observed that a number of important Markov chains, including the Tsetlin library and the riffle shuffle, could be modeled as random walks on face semigroups of hyperplane arrangements.  These turn out to belong to a class of semigroups known as left regular bands.  Moreover, the representation theory of left regular bands can be used to compute the eigenvalues of the transition matrices.  Brown and Diaconis~\cite{DiaconisBrown1} showed that hyperplane walks have diagonalizable transition matrices, and they gave bounds on the mixing time in terms of the eigenvalues.   Brown then generalized the theory to arbitrary left regular bands in~\cite{Brown1}, including those associated to matroids, and eventually to arbitrary bands~\cite{Brown2}.  A band is a semigroup in which each element is idempotent.

This work led to a systematic study of the representation theory of left regular band algebras in their own right.  Brown's student, Saliola, computed the quiver of a left regular band algebra~\cite{Saliola} and gave a bound quiver presentation for the algebra of the face monoid of a central hyperplane arrangement~\cite{Saliolahyperplane}. 
Margolis and the author then computed the quiver for the algebra of an arbitrary band~\cite{DO}.

Definitive results on the representation theory of left regular bands appeared in~\cite{MSS,ourmemoirs}.  Moreover,~\cite{ourmemoirs} introduced the class of CW left regular bands.  These are, roughly speaking, left regular bands whose poset of principal right ideals is the face poset of a regular CW complex.  These include hyperplane face semigroups as well as left regular bands associated to oriented matroids, complex hyperplane arrangements, CAT(0) cube complexes and complexes of oriented matroids (COMS); see~\cite{ourmemoirs} for details and references.  In particular, in~\cite{ourmemoirs} an explicit, field-independent, bound quiver presentation of the algebra of a CW left regular band was given, and it was asked whether this quiver presentation was already valid over the integers.  In this paper we answer that question positively.  

A band is called connected if its integral semigroup algebra has a unit.  We present later a combinatorial characterization of this property that explains the name.  Our main theorem is then:
\begin{theorema}
    Let $B$ be a connected band.  Then there is an explicitly defined quiver $Q(B)$ such that $\mathbb ZB\cong \mathbb ZQ(B)/I$ with $I$ an admissible ideal.  For any field $k$, $k\otimes I$ is an admissible ideal of $kQ(B)$ and $kB\cong kQ(B)/(k\otimes I)$.
\end{theorema}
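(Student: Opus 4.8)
\emph{Strategy.} The plan is to establish the integral isomorphism $\mathbb ZB\cong \mathbb ZQ(B)/I$ with $I$ admissible, and then to obtain the two field statements by a formal base-change argument. For the latter, observe that, $B$ being finite, $\mathbb ZB$ is free of finite rank as a $\mathbb Z$-module (with basis $B$); hence once $\mathbb ZB\cong \mathbb ZQ(B)/I$ is known, the quotient $\mathbb ZQ(B)/I$ is $\mathbb Z$-free, so $\Tor_1^{\mathbb Z}(k,\mathbb ZB)=0$ and applying $k\otimes_{\mathbb Z}(-)$ to $0\to I\to \mathbb ZQ(B)\to \mathbb ZB\to 0$ yields an \emph{exact} sequence $0\to k\otimes I\to kQ(B)\to kB\to 0$, using $k\otimes_{\mathbb Z}\mathbb ZQ(B)=kQ(B)$ and $k\otimes_{\mathbb Z}\mathbb ZB=kB$. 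This gives $kB\cong kQ(B)/(k\otimes I)$. Writing $R$ and $R_k$ for the arrow ideals of $\mathbb ZQ(B)$ and $kQ(B)$, admissibility is inherited by taking $k$-spans: from $R^m\subseteq I\subseteq R^2$ one gets $R_k^m=kR^m\subseteq k\otimes I\subseteq kR^2=R_k^2$. So everything reduces to the integral statement.

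\emph{Local building blocks.} The construction exploits the defining feature of a band: all of its maximal subgroups are trivial, so the obstruction that blocks integral quiver presentations for general finite-semigroup algebras—the modular representation theory of the maximal subgroups at primes dividing their orders—simply does not arise. Write $\Lambda=B/\mathscr D$ for the structure semilattice and let each $\mathscr D$-class $D_\alpha$ ($\alpha\in\Lambda$) be the rectangular band $I_\alpha\times\Lambda_\alpha$ (a direct product of a left-zero and a right-zero semigroup), so that $\mathbb ZD_\alpha\cong \mathbb ZI_\alpha\otimes_{\mathbb Z}\mathbb Z\Lambda_\alpha$. The key local input is integral splitness: for a left-zero semigroup $I$ the augmentation $\mathbb ZI\to\mathbb Z$, $i\mapsto 1$, is surjective with kernel the $\mathbb Z$-free module spanned by the differences $i-i'$, and this kernel squares to zero; dually for right-zero semigroups. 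Hence $\mathbb ZD_\alpha$ carries a $\mathbb Z$-free nilpotent ideal $\mathfrak n_\alpha$ with $\mathbb ZD_\alpha/\mathfrak n_\alpha\cong \mathbb Z$, so each $\mathscr D$-class contributes a single primitive idempotent with residue ring $\mathbb Z$; indeed any $e^0_\alpha\in D_\alpha$ is idempotent with corner $e^0_\alpha\mathbb ZD_\alpha e^0_\alpha=\mathbb Z e^0_\alpha$. This split, $\mathbb Z$-free local structure is exactly what makes $\mathbb ZB$ ``basic over $\mathbb Z$''.

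\emph{Global construction.} Using the integrality of the M\"obius function $\mu_\Lambda$ of the finite semilattice $\Lambda$, I would lift the local idempotents $e^0_\alpha$ to a complete orthogonal family $\{e_\alpha\}_{\alpha\in\Lambda}\subseteq \mathbb ZB$ with $\sum_\alpha e_\alpha=1$ (the identity exists exactly by connectedness of $B$); the orthogonalizing formulas are $\mathbb Z$-linear combinations of products of elements of $B$ with values of $\mu_\Lambda$, hence remain in $\mathbb ZB$. Since each $e_\alpha\mathbb ZBe_\alpha$ has residue ring $\mathbb Z$, the $e_\alpha$ are primitive and $\mathbb ZB/\mathfrak r\cong \prod_{\alpha\in\Lambda}\mathbb Z$, where $\mathfrak r$ is the ideal cut out by the augmentations; thus the vertices of $Q(B)$ are the elements of $\Lambda$. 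I would then show $\mathfrak r$ nilpotent and $\mathfrak r/\mathfrak r^2$ $\mathbb Z$-free, reading off a $\mathbb Z$-basis of $e_\beta(\mathfrak r/\mathfrak r^2)e_\alpha$ as the arrows $\alpha\to\beta$, recovering the quiver of \cite{DO}. Choosing integral lifts of these arrows defines $\phi\colon \mathbb ZQ(B)\to\mathbb ZB$ sending vertices to the $e_\alpha$; it is surjective because the $e_\alpha$ and $\mathfrak r$ generate $\mathbb ZB$ and $\mathfrak r$ is generated modulo $\mathfrak r^2$ by the arrow images, so one climbs the finite radical filtration. With $I=\ker\phi$, nilpotence of $\mathfrak r$ gives $R^m\subseteq I$ while the fact that $\phi$ is an isomorphism modulo $\mathfrak r^2$ gives $I\subseteq R^2$, so $I$ is admissible.

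\emph{The main obstacle.} The crux is the $\mathbb Z$-freeness and correct base change of the radical layers: one must rule out $p$-torsion in $\mathfrak r/\mathfrak r^2$ (and in the higher quotients $\mathfrak r^n/\mathfrak r^{n+1}$), so that the integral arrow count and relations specialize to those of the field presentations of \cite{ourmemoirs,MSS} at every prime without any prime-dependent correction. This is precisely where the triviality of the maximal subgroups is indispensable: it forces every local corner to be split over $\mathbb Z$ with $\mathbb Z$-free radical, so the extension data determining the arrows and relations are carried by $\mathbb Z$-free modules of prime-independent rank. Granting this, the field presentations are recovered from the integral one purely by extension of scalars, completing the proof via the reduction of the first paragraph.
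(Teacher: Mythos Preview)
Your reduction to the integral statement via base change is correct and is exactly the paper's Proposition~\ref{p:admiss.ov.z}.  The overall architecture---lift a complete orthogonal family of idempotents indexed by $\Lambda(B)$, show the radical $\mathfrak r=\ker(\mathbb ZB\to\mathbb Z^{\Lambda(B)})$ is nilpotent, prove $\mathfrak r/\mathfrak r^2$ is $\mathbb Z$-free, and then invoke a Gabriel-type theorem over a PID---is precisely the paper's route (Theorems~\ref{t:gabriel} and~\ref{t.main}).  The gap is at the step you yourself flag as the main obstacle: you have not \emph{proved} that $\mathfrak r/\mathfrak r^2$ is free abelian, only asserted it.

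Your justification---that triviality of the maximal subgroups forces every local corner to be split over $\mathbb Z$ with $\mathbb Z$-free radical---addresses only the internal structure of each rectangular-band $\mathscr D$-class $D_\alpha$.  In fact $e_\alpha(\mathfrak r/\mathfrak r^2)e_\alpha=0$ (Lemma~\ref{l:equal}), so the local corners contribute nothing to the arrows.  The off-diagonal pieces $e_\beta(\mathfrak r/\mathfrak r^2)e_\alpha$ for $\alpha\neq\beta$ are governed by how \emph{distinct} $\mathscr D$-classes interact under multiplication in $B$, and nothing in your argument rules out $p$-torsion there.  The paper fills this gap by an explicit combinatorial computation (Lemmas~\ref{l:homology}--\ref{l:spanning.set}): for each comparable pair $X<Y$ it builds a $\mathbb ZB$-linear map from $e_Y\mathfrak re_X$ to the reduced zeroth homology $\til H_0(\Gamma_{\mathscr R}(Y,X))$ of an auxiliary graph, and exhibits a spanning set of $e_Y(\mathfrak r/\mathfrak r^2)e_X$ that maps to a $\mathbb Z$-basis of $\til H_0$, thereby producing a free-abelian basis whose cardinality is exactly the arrow count in $Q(B)$.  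You offer no substitute for this.

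If instead you meant to \emph{import} the field-independence of the quiver from~\cite{DO} and deduce $\mathbb Z$-freeness by rank counting, that can be made to work: since $\mathbb ZB/\mathfrak r\cong\mathbb Z^{\Lambda(B)}$ is free one has $\mathfrak r_k=k\otimes\mathfrak r$ and hence $\mathfrak r_k/\mathfrak r_k^2\cong k\otimes(\mathfrak r/\mathfrak r^2)$ for every field $k$; then equality of $\dim_{\mathbb Q}$ and $\dim_{\mathbb F_p}$ for all $p$ kills the torsion in the finitely generated group $\mathfrak r/\mathfrak r^2$.  But this route is not what you wrote, and in any case it still needs an argument identifying the resulting abstract quiver with the explicitly defined $Q(B)$.
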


Theorem~A can be interpreted as saying that the represenation theory of $B$ is independent of the field.  

\section{Bands}
%The reader is referred  to~\cite[Chapter~2]{ourmemoirs} for background on left regular bands. 
A \emph{band} $B$ is a semigroup in which each element is idempotent. For example, meet semilattices are precisely the commutative %left regular 
bands. All bands are assumed finite here.  %satisfying the identities $x^2=x$ and $xyx=xy$. 
A nice introduction to bands can be found in the appendices of Brown~\cite{Brown2}, although he works with the dual order to us on the support semilattice.  %A general semigroup theory reference is~\cite{CP}.

%The following combines Theorem~1.13 and Corollary~1.14 of~\cite{repbook}.
%
%\begin{Thm}\label{rmk:J=D}
%Let $B$ be a finite band.
%\begin{enumerate}
%    \item $BaB=BabB\implies aB=abB$.
%    \item $BaB= BbaB\implies Ba=Bba$.
%    \item $BaB= BbB$ if and only if there is $c\in B$ with $aB=cB$ and $Bc=Bc$.
%\end{enumerate}
%\end{Thm}
  Bands enjoy the following crucial property.
   \begin{Thm}[Clifford~{\cite{Clifford}}]\label{t:clifford}
   Let $B$ be a band.  Then $BaB\cap BbB=BabB$ for all $a,b\in B$
   \end{Thm}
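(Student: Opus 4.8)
The inclusion $BabB\subseteq BaB\cap BbB$ holds in any semigroup and uses no idempotency: writing a general element of $BabB$ as $x(ab)y$, it equals $x\,a\,(by)\in BaB$ and also $(xa)\,b\,y\in BbB$. So the entire content lies in the reverse inclusion $BaB\cap BbB\subseteq BabB$, and this is where the band hypothesis must be used.

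My plan for the reverse inclusion is to pass to the maximal semilattice image of $B$. Let $\phi\colon B\to Y$ be the projection onto the largest semilattice quotient; by the Clifford--McLean structure theory each fibre $\phi^{-1}(\alpha)$ is a subsemigroup that is a rectangular band, hence satisfies the identity $uvu=u$. Since $\phi$ is a homomorphism into a semilattice, every product lies below each of its factors: $\phi(x_1\cdots x_n)\le \phi(x_i)$, where $\le$ is the natural order on $Y$ determined by $\alpha\le\beta\iff\alpha\beta=\alpha$. The key lemma I will prove is that $\phi(c)\le\phi(x)$ implies $cxc=c$. Indeed $\phi(cxc)=\phi(c)\phi(x)\phi(c)=\phi(c)$, so $c$ and $cxc$ lie in the same fibre, which is a rectangular band; applying $uvu=u$ with $u=c$ and $v=cxc$ gives $c(cxc)c=c$, while idempotency collapses the left-hand side to $cxc$. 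Hence $cxc=c$.

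With the lemma in hand the reverse inclusion is immediate. Take $c\in BaB\cap BbB$. Applying $\phi$ and using that products lie below their factors yields $\phi(c)\le\phi(a)$ and $\phi(c)\le\phi(b)$, so $\phi(c)\le\phi(a)\phi(b)=\phi(ab)$, since $\phi(a)\phi(b)$ is the meet of $\phi(a)$ and $\phi(b)$ in $Y$. The lemma applied to the element $ab$ then gives $c=c(ab)c\in BabB$, as required. I expect the main obstacle to be the structural input rather than the final assembly: one must justify that the fibres of the maximal semilattice congruence are rectangular bands satisfying $uvu=u$. To keep the argument self-contained and avoid any appearance of circularity with the structure theory, the cleanest substitute is to introduce directly the relation $a\mathrel\eta b\iff aba=a$ and $bab=b$, verify that it is the least semilattice congruence, and check that $\phi(c)\le\phi(x)$ unwinds to exactly $cxc=c$; this verification is the technical heart of the proof, after which Clifford's identity drops out as above.
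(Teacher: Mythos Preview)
Your high-level strategy---reduce to the maximal semilattice quotient and use that the fibres are rectangular bands---is a valid alternative to the paper's argument, and your key lemma and final assembly are correct once the structural input is in place. The gap, which you yourself flag, is that this structural input is exactly the hard part. Invoking Clifford--McLean as a black box is circular: that structure theorem is standardly proved \emph{via} the identity $BaB\cap BbB=BabB$ (it is how one shows the support map $b\mapsto BbB$ is a homomorphism onto a semilattice). Your proposed substitute---verify directly that $\eta$ defined by $a\mathrel\eta b\iff aba=a$ and $bab=b$ is a semilattice congruence---is not actually carried out, and it is not free. Transitivity of $\eta$ reduces to swallowing, but left/right compatibility with multiplication and commutativity of the quotient both seem to need the observation $BxyB=ByxB$. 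So the ``technical heart'' you defer ends up requiring essentially the same ingredients as a direct proof.

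The paper does that direct proof, with no detour through structure theory. It first records $BxyB=ByxB$ in one line (from $xy=x(yx)y$ and $yx=y(xy)x$). Then, given $c=uav=xby\in BaB\cap BbB$, it uses idempotency of the band elements $ua$ and $by$ to compute $cuac=cua\cdot uav=c\cdot uav=c$ and similarly $cbyc=c$; hence $c(byc\cdot cua)c=c$, so $BcB=B(byc)(cua)B=B(cua)(byc)B\subseteq BabB$ by the commutativity observation. Your approach, once completed, would buy a conceptual picture (the theorem is exactly the statement that the support map is a semilattice homomorphism); the paper's approach buys brevity and complete self-containment in three lines.
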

   \begin{proof}
   First we observe that $BxyB=ByxB$ for all $x,y\in B$ as $xy= x(yx)y$ and $yx=y(xy)x$.
   
   Trivially $BabB\subseteq BaB\cap BbB$.  Let $c\in BaB\cap BbB$ and write $c=uav=xby$.   Then $cuac=cuauav=cuav=c$ and, similarly, $cbyc=c$.  Then $c(byccua)c=c$ and so $BcB=B(byc)(cua)B=B(cua)(byc)B\subseteq BabB$ by our observation.  This completes the proof.  
   \end{proof}
   Therefore, the set $\Lambda(B)$ of principal ideals of $B$ is a meet semilattice, called the \emph{support semilattice} of $B$, and $\sigma\colon B\to \Lambda(B)$ given by $\sigma(b) = BbB$ is a homomorphism, called the \emph{support homomorphism}.

Two crucial properties of bands are what Brown calls the ``swallowing'' and ``deletion'' properties~\cite{Brown2}.

\begin{Prop}\label{p:deletion}
Let $B$ be a band.
\begin{enumerate}
    \item  If $a\in BbB$, then $aba=a$. \hfill\emph{(Swallowing)}
    \item  If $BaB=BcB\subseteq BbB$, then $abc=ac$. \hfill\emph{(Deletion)}
\end{enumerate}
\end{Prop}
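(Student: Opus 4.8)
The plan is to prove the two identities essentially independently, obtaining (2) from (1).

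For the swallowing property (1), I would argue by a short idempotent computation. Since $a\in BbB$, write $a=ubv$ with $u,v\in B$. The key observation is that both $bv$ and $ab$ are idempotent. First I would show $abv=a$: indeed $abv=ubvbv=u(bv)(bv)=u(bv)=ubv=a$, using $(bv)^2=bv$. Then $aba=(ab)a=(ab)(abv)=(ab)^2v=(ab)v=a$, where the middle step substitutes $a=abv$ and the last uses idempotency of $ab$ together with $abv=a$. This settles (1) with no appeal to structure theory.

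For the deletion property (2), I would pass to the common $\mathscr J$-class. Put $J=BaB=BcB$ and let $D=\{x\in B: BxB=J\}$ be the corresponding fibre of the support homomorphism $\sigma$. By Clifford's theorem (Theorem~\ref{t:clifford}), $D$ is closed under multiplication: for $x,y\in D$ one has $BxyB=BxB\cap ByB=J$, so $xy\in D$; and by the swallowing property, since $x\in BxB=ByB$, we get $xyx=x$ for all $x,y\in D$. Thus $D$ is a rectangular band, and in particular satisfies the identity $xyz=xz$. Now Clifford's theorem also gives $BbcB=BbB\cap BcB=BcB=J$ (using $BcB\subseteq BbB$), so $bc\in D$, and of course $a,c\in D$. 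Applying the rectangular identity to the triple $(a,bc,c)$ yields $a(bc)c=ac$, and since $(bc)c=bc$ the left-hand side is exactly $abc$. Hence $abc=ac$.

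The main obstacle is the bookkeeping in (2): one must verify that the fibre $D$ is genuinely a subsemigroup and a rectangular band before the identity $xyz=xz$ is available, and that all three of $a$, $c$, and $bc$ lie in $D$ so that the identity applies. Both verifications rest squarely on Clifford's theorem and the already-established swallowing property, so the only real content is assembling them in the right order. The alternative of a purely manipulative proof of $abc=ac$ appears to require reconstructing the rectangular-band coordinates by hand and is considerably messier.
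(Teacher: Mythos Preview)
Your proof of swallowing is correct and essentially the same idempotent manipulation as the paper's; your version (first show $abv=a$, then $aba=(ab)(abv)=(ab)^2v=abv=a$) is arguably tidier than the paper's, which passes through $axba=a$ first, but the idea is identical.

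For deletion your argument is correct but more structural than the paper's. You identify the fibre $D=\sigma^{-1}(J)$, verify via Clifford's theorem and swallowing that it is a rectangular band, note that $a,c,bc\in D$, and then invoke the rectangular-band identity $xyz=xz$. The paper instead gives a bare one-line computation:
\[
abc = ab(cac) = (abca)c = ac,
\]
using swallowing twice (once for $cac=c$, since $c\in BaB$, and once for $a(bc)a=a$, since Clifford gives $BbcB=BcB=BaB$). Your route has the virtue of explaining \emph{why} the computation works --- the $\mathscr{J}$-class is a rectangular band --- but it imports the identity $xyz=xz$ as a known fact, whereas the paper's proof is entirely self-contained from swallowing and Clifford. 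Both rely on exactly the same two ingredients; the paper simply applies them directly rather than naming the intermediate structure.
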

\begin{proof}
    For swallowing, if  $a=xby$, then $a=axbya$.  Therefore, $axba=axb(axbya)=axbya=a$ and, hence, $aba=axba(ba)=axba=a$.
    Deletion follows from Theorem~\ref{t:clifford} and two applications of swallowing: $abc = ab(cac)=(abca)c=ac$.
\end{proof}

   %The minimum element of $\Lambda(B)$ will be denoted by $\wh 0$.  

A band is called \emph{left regular} if it also satisfies $xyx=xy$.  In this case, $BaB=Ba$, and so the semilattice of principal ideals is the semilattice of principal left ideals.  Most of the interest in bands comes from the case of left regular bands. See~\cite{Brown1} and~\cite[Chapter~2]{ourmemoirs} for more on left regular bands.

%\begin{Rmk}\label{rmk:J=D}
%By standard semigroup theory, $BaB=BbB$ if and only if there exists $c\in B$ with $aB=cB$ and $Bc=Bb$.  Indeed, if $BaB=BbB$, then $aba=a$ and $bab=b$ by swallowing, and so $c=ab$ works. 
% if $uav=b$ and $xby=a$, then $xbya=a$ and $b=uxbyav$.  Therefore, $byav=b$, so $bB=byaB$.  But $a=xbya$ implies $Ba=Bbya$, and so taking $c=bya$ does the job.  
%\end{Rmk}
%
%
%In a band, if $BaB=BcB\subseteq BbB$, then $abc=ac$.  Brown calls this the deletion property in~\cite[Appendix~A]{Brown2} and we shall invoke it throughout.  
%One can show that $aB=bB$ if and only if $a=b$, and so defining $a\leq b$ if $aB\subseteq bB$ (or, equivalently, if $ba=a$), defines a partial order on $B$ that is compatible with respect to left multiplication.  Any homomorphism of left regular bands is order preserving.  Note that $\sigma\colon B\to \Lambda(B)$ preserves strict inequalities~\cite[Proposition~3.8]{ourmemoirs}: if $b'<b$ then $\sigma(b')<\sigma(b)$.  %Indeed, if $\sigma(b)=\sigma(b')$, then $b'=bb' = b$ as $b\in Bb'$.  
There are natural  preorders $\leq_{\mathscr L}$ and $\leq_{\mathscr R}$, due to Green~\cite{Green}, that can be defined on any semigroup.   For a band $B$ they take the form $a\leq_{\mathscr R} b$ if and only if $aB\subseteq bB$, if and only if $ba=a$.  Dually, $a\leq_{\mathscr L} b$ if and only if $Ba\subseteq Bb$, if and only if $ab=a$.    On a left regular band, $\leq_{\mathscr R}$ is a total order. 
If $\leq$ is a preorder on a set, we write $p<q$ if $p\leq q$, but $q\nleq p$.
It is important to note that $\sigma$ preserves strict inequalities.  For example, if $a\leq _{\mathscr R} b$ and $\sigma(a)=\sigma(b)$, then $b=bab=ab$, and so $b\leq_{\mathscr R} a$. 

\begin{Rmk}\label{rmk:useful}
    We shall frequently use that if $\sigma(a)=\sigma(b) = \sigma(c)$ and $a,b\in Bc$, then $ba=bac = bc=b$ by deletion.
\end{Rmk}

If $X\in \Lambda(B)$, then $B_{\geq X}=\{b\in B\mid \sigma(b)\geq X\}$ is a subsemigroup of $B$ with support semilattice $\Lambda(B)_{\geq X}$.   We associate two simple graphs to each $X\in \Lambda(B)$. The graph $\Gamma_{\mathscr R}(B,X)$ has vertex set $\sigma\inv(X)$ and we define $x,x'$ to be adjacent if they have a common upper bound in the preorder $\leq_{\mathscr R}$.  The graph $\Gamma_{\mathscr L}(B,X)$ is defined dually using $\leq_{\mathscr L}$.  We say that $B$ is \emph{connected} if $\Gamma_{\mathscr L}(B,X)$ and $\Gamma_{\mathscr R}(B,X)$ are connected for each $X\in \Lambda(B)$.  Note that $\Gamma_{\mathscr L}(B,X)$ is automatically connected for a left regular band as $\sigma(x)=X=\sigma(x')$ implies that $x=xx$, $x'=x'x$, and so $x$ is a common $\leq_{\mathscr L}$-upper bound.  We shall see that connectivity of a band is equivalent to its semigroup algebra having a unit.

If $P$ is a preordered set, then $P^{\leq p} = \{q\in P\mid q\leq p\}$ and $P^{<q} = \{q\in P\mid q< p\}$.
If $x\in B$, then $B^{\leq_\mathscr R x}=xB$ and %is a band with support semilattice $\Lambda(B)^{\leq \sigma(x)}$.  %If $\sigma(x)=\sigma(x')$, then $xB\cong x'B$ via left multiplication by $x$ with inverse left multiplication by $x'$. %The subsemigroup $xB\setminus \{x\}$ is denoted $B^{<x}$, as it consists of those elements of $B$ strictly below $x$.  
$B^{<_\mathscr R x}$ are right ideals of  $B$. The support semilattice of $B^{<_{\mathscr R}x}$ can be identified with $\Lambda(B)^{<X}$.
%If $Y\leq \sigma(x)$, then $B^{<x}_{\geq Y} = B^{<x}\cap B_{\geq Y}$.  The support semilattice of $B^{<x}_{\geq Y}$ can be identified with the half-open interval $[Y,\sigma(x))$ in $\Lambda(B)$.  
Dual remarks hold for $B^{\leq_\mathscr L x}=Bx$ and $B^{<_\mathscr L x}$.

\section{Band algebras}
Fix a commutative ring $k$ and band $B$.  If $B$ is left regular, then the semigroup algebra $kB$ is unital if and only if $B$ is connected~\cite[Theorem~4.15]{ourmemoirs}.  We extend this result to the general case here.  We shall use throughout the well-known fact that idempotents lift modulo a nilpotent ideal for unital rings~\cite[Theorem~21.28]{LamBook}.

%The following lemma is elementary and well known, so we omit the proof.  We shall also invoke it without reference.

\begin{Lemma}\label{l:equiv}
Let $X$ be a set and $k$ a commutative ring.  Let $\equiv$ be an equivalence relation on $X$ and  $R\subseteq {\equiv}$ a binary relation.  Let $V$ be the submodule of $kX$ spanned by all differences $x-y$ with $x\equiv y$.  Then $R$ generates $\equiv$ if and only if $V$ is spanned by all differences $x-y$ with $(x,y)\in R$.  
\end{Lemma}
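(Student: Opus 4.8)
The plan is to let $W\subseteq kX$ denote the submodule spanned by the differences $x-y$ with $(x,y)\in R$. Since $R\subseteq{\equiv}$ we have $W\subseteq V$ automatically, so the lemma asserts precisely that $W=V$ if and only if $R$ generates $\equiv$. I would prove the two implications separately, recording as a standing hypothesis that $k$ is nonzero (the statement is false for the zero ring, where $V=W=0$ carries no information).

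For the forward implication, suppose $R$ generates $\equiv$. It suffices to show that every spanning element $x-y$ of $V$, so with $x\equiv y$, already lies in $W$. By the standard description of the equivalence relation generated by $R$ as its reflexive–symmetric–transitive closure, there is a finite chain $x=z_0,z_1,\dots,z_n=y$ in which, for each $i$, either $(z_i,z_{i+1})$ or $(z_{i+1},z_i)$ lies in $R$. Telescoping gives
\[
x-y=\sum_{i=0}^{n-1}(z_i-z_{i+1}),
\]
and each summand is $\pm$ a generator of $W$, whence $x-y\in W$. Thus $V\subseteq W$ and $V=W$.

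The reverse implication is the substantive one. Suppose $V=W$, and let $\equiv_R$ be the equivalence relation generated by $R$. As $R\subseteq{\equiv}$ we have $\equiv_R\,\subseteq\,\equiv$, so it remains only to prove $\equiv\,\subseteq\,\equiv_R$. The device I would use is to recover the combinatorics of $\equiv_R$ from the module $W$ by means of indicator functionals: for each $\equiv_R$-class $C$, define the $k$-linear map $\phi_C\colon kX\to k$ by $\phi_C(x)=1$ if $x\in C$ and $\phi_C(x)=0$ otherwise. Because each pair in $R$ lies in a single $\equiv_R$-class, $\phi_C$ annihilates every generator $x-y$ of $W$, and hence all of $W=V$. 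Now if $a\equiv b$ then $a-b\in V$, so taking $C=[a]_{\equiv_R}$ yields $0=\phi_C(a-b)=\phi_C(a)-\phi_C(b)=1-\phi_C(b)$, which forces $b\in C$, i.e. $a\equiv_R b$. Here the step $1\ne 0$ is exactly where nondegeneracy of $k$ enters.

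I expect the only real obstacle to be this reverse direction: the forward direction is a routine telescoping computation, whereas the reverse requires extracting the partition $X/\equiv_R$ from the submodule $W$. The clean way to do so is the family $\phi_C$ above, which is equivalent to observing that the quotient map $kX\to k(X/\equiv_R)$ has kernel exactly $W$; either formulation makes the hypothesis $1\ne 0$ visible and shows it cannot be dropped.
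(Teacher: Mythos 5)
Your proof is correct, and its overall strategy matches the paper's: the forward direction is the same telescoping computation, and the reverse direction detects $\equiv_R$-classes linearly. The implementation of the reverse direction differs in a small but real way. The paper works with the quotient map $\pi\colon kX\to k[X/{\sim}]$ induced by $X\to X/{\sim}$, and its key step is the identification $\ker\pi = W$, which rests on the standard (but not entirely trivial) fact that the kernel of such an induced map is spanned by the differences $x-y$ with $x\sim y$; from $V=W=\ker\pi$ one concludes $\pi(x)=\pi(y)$ and hence $x\sim y$. Your indicator functionals $\phi_C$ are just the coordinate functionals of $\pi$ with respect to the basis $X/{\sim}$, so the two arguments are dual to each other; but your version only needs the easy inclusion --- that each $\phi_C$ vanishes on the generators of $W$ --- and never needs to know that $\ker\pi$ is spanned by differences, so it is marginally more self-contained. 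You also make explicit a hypothesis that both proofs secretly use: over the zero ring the lemma is false (take $R=\emptyset$ and $\equiv$ nontrivial), and the paper's final step ``$\pi(x)=\pi(y)$ implies $x\sim y$'' likewise requires that distinct classes give distinct (in particular, nonequal) basis elements, i.e.\ $1\neq 0$ in $k$. Flagging this is a genuine, if minor, improvement in precision over the paper's statement.
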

\begin{proof}
Let $W$ be the span of all differences $x-y$ with $(x,y)\in R$, and note that $W\subseteq V$.  
Let $\sim$ be the equivalence relation generated by $R$.    Then if $x\sim y$, we can find $x=x_0,x_1,\ldots, x_n=y$ with $(x_i,x_{i+1})$ or $(x_{i+1},x_i)$ in  $R$ for $0\leq i\leq n-1$.  Then \[x-y = (x_0-x_1)+(x_1-x_2)+\cdots +(x_{n-1}-x_n)\in W.\]   In particular, if $R$ generates $\equiv$, then $V=W$.  

Suppose, conversely, that $V=W$.  Let $\pi\colon KX\to K[X/{\sim}]$ be the map induced by the projection $X\to X/{\sim}$.  Then $\ker \pi$ is spanned by all differences $x-y$ with $x\sim y$, and hence coincides with $W$ by the above.  Thus, if $x\equiv y$, then $x-y\in V=W$, and so $\pi(x-y)=0$.  Therefore, $\pi(x)=\pi(y)$, that is, $x\sim y$. 
\end{proof}

By a celebrated result of Solomon~\cite{Burnsidealgebra}, if $L$ is a meet semilattice, then $kL\cong k^L$, the isomorphism sending $\ell\in L$ to the characteristic function $\delta_{L^{\leq \ell}}$.  Let $\sigma\colon kB\to k\Lambda(B)$ be the linear extension of $\sigma$ and let $\tau\colon kB\to k^{\Lambda(B)}$ be the composition of $\sigma$ with the Solomon isomorphism: $\tau(b)=\delta_{\Lambda(B)^{\leq \sigma(b)}}$.  Note that $\ker \sigma=\ker \tau$.

A crucial fact for us is the following theorem, proved by Brown~\cite{Brown2} over a field, and without explicitly stating the nilpotency bound.  The general case follows from the case of $\mathbb Q$.  Indeed, since the sequence $0\to \ker \sigma\to \mathbb ZB\to \mathbb Z\Lambda(B)\to 0$ splits over $\mathbb Z$, it follows that $k\otimes \ker \sigma$
is the kernel of $kB\to k\Lambda(B)$. Here all tensor products are over $\mathbb Z$.  The nilpotency of $\mathbb Q\otimes \ker\sigma$ implies the nilpotency of $\ker \sigma$ (since $\ker \sigma$ is free abelian, and hence embeds in $\mathbb Q\otimes \ker\sigma$).  This in turn yields the nilpotency of $k\otimes \ker\sigma$ for any commutative ring $k$.
 
\begin{Thm}[Brown~{\cite[Theorem~B1]{Brown2}}]\label{t:brown.nil}
Let $k$ be a commutative ring and $\sigma\colon kB\to k\Lambda(B)$ be induced by the support map.  If $\mathcal J=\ker \sigma$, then $\mathcal J^{2^{n+1}+1}=0$ where $n$ is the length of the longest chain in $\Lambda(B)$.     
\end{Thm}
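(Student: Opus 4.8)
The plan is to prove the explicit bound by induction on $n$, the length of the longest chain in $\Lambda(B)$, working combinatorially so that the argument is uniform in $k$ (the swallowing and deletion identities of Proposition~\ref{p:deletion} hold over any commutative ring). First I would record that since $\sigma\colon kB\to k\Lambda(B)$ is the linearization of the surjection $\sigma\colon B\to\Lambda(B)$, its kernel $\mathcal J$ is spanned by the differences $a-b$ with $\sigma(a)=\sigma(b)$ (cf.\ Lemma~\ref{l:equiv}). Consequently $\mathcal J^N$ is spanned by products of $N$ such differences, and every term of $(a_1-b_1)\cdots(a_N-b_N)$ is a word $c_1\cdots c_N$ with $c_i\in\{a_i,b_i\}$, all sharing the common support $\sigma(a_1)\wedge\cdots\wedge\sigma(a_N)$.

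For the base case $n=0$, the semilattice $\Lambda(B)$ is a single point, so $a\in BbB$ for all $a,b$; swallowing (Proposition~\ref{p:deletion}(1)) then gives $aba=a$, i.e.\ $B$ is a rectangular band, and deletion (Proposition~\ref{p:deletion}(2)) gives $pmq=pq$ for all $p,m,q$. Hence for any difference $m-m'$ one has $p(m-m')q=pmq-pm'q=pq-pq=0$, and expanding $(a_1-b_1)(a_2-b_2)(a_3-b_3)$ with the middle difference sandwiched shows $\mathcal J^{3}=\mathcal J^{2^{0+1}+1}=0$.

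For the inductive step, write $h(X)$ for the length of the longest chain in $\Lambda(B)$ descending from $X$, so that $h$ is strictly monotone and its maximum is $n$. Set $B''=\sigma^{-1}(\{X\mid h(X)\le n-1\})$: this is a two-sided ideal of $B$, hence a band, whose support semilattice has chain length $n-1$. Its algebra is the ideal $F=kB''$ of $kB$, and $\mathcal J\cap F=\ker(\sigma|_{B''})$, which by the inductive hypothesis satisfies $(\mathcal J\cap F)^{N_{n-1}}=0$ with $N_{n-1}=2^{n}+1$. The goal is to show that a product of $N_n:=2N_{n-1}-1$ spanning differences lies in $(\mathcal J\cap F)^{N_{n-1}}$, and hence vanishes; since $\mathcal J$ is spanned by such differences this yields $\mathcal J^{N_n}=0$ with $N_n=2(2^{n}+1)-1=2^{n+1}+1$, as required. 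The point that makes powers of $\mathcal J\cap F$ accumulate is that a spanning difference multiplied into $F$ on either side lands again in $\mathcal J\cap F$: if $\sigma(a)=\sigma(b)$ and $w\in B''$ then $\sigma(aw)=\sigma(bw)$, so $aw-bw\in\mathcal J\cap F$. Thus once the running support has descended below the top level, every further factor contributes a factor of $\mathcal J\cap F$.

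The main obstacle is the factors that stay at maximal height, and controlling them precisely enough to obtain the doubling recursion $N_n=2N_{n-1}-1$ rather than a crude linear or product estimate. The difficulty is genuinely two-sided: a short swallowing computation gives $bv\leq_{\mathscr L}v$ and $vb\leq_{\mathscr R}v$ for $v\in\sigma^{-1}(X)$ with $h(X)=n$ and $\sigma(b)\geq X$, so under the rectangular-band decomposition $\sigma^{-1}(X)\cong L_X\times R_X$ left multiplication alters only the $L$-coordinate and right multiplication only the $R$-coordinate. Hence a top-level difference trapped inside a fiber can be annihilated by the deletion identity $pmq=pq$ only when it is sandwiched by factors of larger-or-equal support on \emph{both} sides, so removing it consumes factors on the left and on the right. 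Accounting for these two independent collapses—one controlling each coordinate—is exactly what forces two passes of the lower bound per level, the $-1$ reflecting that the two passes share one boundary factor. Making this bookkeeping rigorous, by tracking as the support descends which factors are spent collapsing top-level defects and which accumulate as powers of $\mathcal J\cap F$, is the one genuinely delicate step; the base case and the passage to the ideal $B''$ are routine.
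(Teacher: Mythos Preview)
Your route is entirely different from the paper's, and the comparison is instructive.  The paper does not attempt a combinatorial proof of the bound at all: it cites Brown's result over a field as a black box and then observes that the general case follows by base change.  Concretely, the sequence $0\to\ker\sigma\to\mathbb ZB\to\mathbb Z\Lambda(B)\to 0$ splits over $\mathbb Z$, so $k\otimes\ker\sigma$ is the kernel of $kB\to k\Lambda(B)$; since $\ker\sigma$ is free abelian it embeds in $\mathbb Q\otimes\ker\sigma$, and nilpotency over $\mathbb Q$ (which is Brown's theorem for the field $\mathbb Q$) then forces the same nilpotency index over $\mathbb Z$, hence over every $k$ after tensoring.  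That is the whole argument in the paper.

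Your proposal instead aims for a self-contained inductive proof with the recursion $N_n=2N_{n-1}-1$.  The base case and the setup of the inductive step (the ideal $B''$, the identification $\mathcal J\cap F=\ker(\sigma|_{kB''})$, and the observation that once the running support has dropped into $B''$ each further factor contributes to $\mathcal J\cap F$) are all fine.  But the heart of the matter---showing that a product of $2N_{n-1}-1$ spanning differences actually lands in $(\mathcal J\cap F)^{N_{n-1}}$---is precisely what you label ``the one genuinely delicate step'' and then do not carry out.  Your heuristic that the two-sided rectangular structure of a top fibre forces two collapses, one for each coordinate, is suggestive, but it is not a proof: one must control how many powers of $\mathcal J\cap F$ are produced when blocks of top-fibre differences are interspersed with lower ones, and the naive absorption $\mathcal J_T\cdot\mathcal J_F\subseteq\mathcal J_F$ does \emph{not} by itself increase the $\mathcal J_F$-power.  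Getting the exact doubling $N_n=2N_{n-1}-1$ out of this bookkeeping requires a real argument that you have not supplied.

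In short: the paper sidesteps the combinatorics entirely via base change from Brown's field case, while you attempt to redo Brown's combinatorics from scratch; your plan is reasonable but the inductive step is a genuine gap, not a routine verification.
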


In particular, when $k$ is a field, $\ker\sigma=\rad(kB)$ as $k\Lambda(B)\cong k^{\Lambda(B)}$ is semisimple.

\begin{Prop}\label{p:left.id}
Let $S$ be a finite semigroup and $k$ a commutative ring. Let $J$ be a nilpotent two-sided ideal of $kS$ such that $kS/J$ has a left identity.   Then $kS$ has a left identity if and only  if $J=kS\cdot J$.
\end{Prop}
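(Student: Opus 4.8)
The plan is to prove the two implications separately, the forward one being immediate and the converse requiring a lifting argument followed by a Nakayama-type iteration.

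First, the easy direction. If $kS$ has a left identity $e$, then in particular $ej=j$ for every $j\in J$, whence $J\subseteq kS\cdot J\subseteq J$, the second inclusion because $J$ is a left ideal. Hence $J=kS\cdot J$. For the converse I would assume $J=kS\cdot J$ and first record that a left identity is automatically idempotent (apply it to itself), so the left identity $\bar e$ of $kS/J$ is an idempotent of that quotient. The goal is to lift $\bar e$ to an idempotent of $kS$ and then upgrade it to an actual left identity.

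Since $kS$ need not be unital, to invoke the cited fact that idempotents lift modulo a nilpotent ideal I would pass to the unitalization $(kS)^1=kS\oplus k$, in which $J$ remains a nilpotent two-sided ideal and $(kS)^1/J\cong (kS/J)\oplus k$. Lifting the idempotent $(\bar e,0)$ produces an idempotent of $(kS)^1$; because the $k$-coordinate is preserved by the quotient map and vanishes in the target, this idempotent in fact lies in $kS$. This yields $e\in kS$ with $e^2=e$ and $ex\equiv x\pmod J$ for all $x\in kS$. Ensuring that the lifted idempotent genuinely lands in $kS$ rather than merely in its unitalization is the one bookkeeping point I would be careful about, which is exactly why tracking the $k$-coordinate matters.

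The remaining and main step is to show that this $e$ is an honest left identity, and the crux is the claim $eJ=J$. Using $J=kS\cdot J$, for $a\in kS$ and $j\in J$ I would write $aj=e(aj)+(a-ea)j$; here $aj\in J$ gives $e(aj)\in eJ$, while $a-ea\in J$ gives $(a-ea)j\in J^2$. Since such products span $J$, this yields $J=eJ+J^2$. Substituting this relation into itself and using $J^m\subseteq J$, an induction gives $J=eJ+J^m$ for all $m$, and the nilpotency $J^n=0$ then forces $J=eJ$. As $e$ is idempotent, $eJ=J$ means $e$ acts as the identity on $J$: writing $j=ej'$ gives $ej=e^2j'=ej'=j$. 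Finally, for arbitrary $x\in kS$ the element $x-ex$ lies in $J$, so on one hand $e(x-ex)=x-ex$, while on the other $e(x-ex)=ex-e^2x=0$; hence $x=ex$ and $e$ is a left identity. I expect the Nakayama-style iteration $J=eJ+J^2=\cdots=eJ$ to be the real heart of the argument.
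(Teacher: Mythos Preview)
Your proof is correct and follows essentially the same approach as the paper. Both embed $kS$ in a unital ring (your unitalization $(kS)^1=kS\oplus k$ is exactly the paper's $kM$ with $M=S\cup\{1\}$), lift the idempotent through the nilpotent ideal, observe it lands in $kS$, and then run a Nakayama-type iteration; the paper phrases the last step as $(1-e)kS=(1-e)J=(1-e)J^2=\cdots=0$, which is just your $J=eJ+J^2=\cdots=eJ$ rewritten using $1-e$ in the unital extension.
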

\begin{proof}
Trivially, if $kS$ has a left identity, then $kS\cdot J=J$.  Assume now that $kS\cdot J=J$. Let $M$ be the monoid obtained by adjoining an identity $1$ to $S$, and view $kS$ as an ideal in $kM$.  Note that $J$ is also an ideal of $kM$. 
 Since $J$ is nilpotent, we can lift a left identity of $kS/J$ to an idempotent $e\in KM$, which must necessarily belong to $kS$ as $J\subseteq kS$ are ideals in $kM$.    We want to show that $(1-e)kS=0$.  By choice of $e$, we have $(1-e)(kS/J)=0$, i.e., $(1-e)kS\subseteq J$, and so $(1-e)kS=(1-e)J$.  Multiplying both sides on the right by $J$ and using that $kS\cdot J=J$, we conclude that $(1-e)J=(1-e)J^2$.  By induction, $(1-e)J=(1-e)J^n$ for all $n\geq 1$.    Since $J$ is nilpotent, it follows that $0=(1-e)J=(1-e)kS$, as required.
\end{proof}

Of course, the dual to Proposition~\ref{p:left.id} for right identities holds.  We shall apply the proposition and its dual to $\ker\sigma$ since $k\Lambda(B)\cong k^{\Lambda(B)}$ is unital.

\begin{Thm}\label{t:right.connected}
Let $B$ be a band.  Then the following are equivalent.
\begin{enumerate}
    \item $B$ is connected.
    \item $\mathbb ZB$ has an identity.
    \item $kB$ is unital for all commutative rings $k$.
\end{enumerate}
\end{Thm}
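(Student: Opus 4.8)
The plan is to prove the single uniform statement that, for \emph{every} commutative ring $k$, the algebra $kB$ is unital if and only if $B$ is connected. All three listed conditions then follow immediately: (3)$\Rightarrow$(2) is trivial, (2) is exactly the case $k=\mathbb Z$ of this equivalence (yielding (2)$\Rightarrow$(1)), and (1)$\Rightarrow$(3) is the forward direction applied to an arbitrary $k$. So the real content is a field-independent criterion for unitality.

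The engine is Proposition~\ref{p:left.id} applied to the two-sided ideal $\mathcal J=\ker\sigma$. By Theorem~\ref{t:brown.nil} this ideal is nilpotent, and $kB/\mathcal J\cong k\Lambda(B)\cong k^{\Lambda(B)}$ is unital, hence has a left identity; so the proposition and its dual say that $kB$ has a left identity iff $\mathcal J=kB\cdot\mathcal J$ and a right identity iff $\mathcal J=\mathcal J\cdot kB$. Since a ring possessing both a left and a right identity is unital (the two coincide), it remains to match these two module equalities with the connectivity of the graphs $\Gamma_{\mathscr R}(B,X)$ and $\Gamma_{\mathscr L}(B,X)$. I would treat the left-identity/$\mathscr R$-side and leave the other to duality.

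Here I would first note that $kB$ is graded by the semilattice $\Lambda(B)$ (as $\sigma(ab)=\sigma(a)\wedge\sigma(b)$), so $\mathcal J=\bigoplus_X V_X$ where $V_X$ is the $k$-span of the differences $a-b$ with $\sigma(a)=\sigma(b)=X$. The crux is the identity $kB\cdot\mathcal J=\bigoplus_X W_X$, where $W_X$ is the span of the edge differences $x-x'$ arising from adjacent vertices of $\Gamma_{\mathscr R}(B,X)$. One inclusion is easy: if $x,x'$ are adjacent with common $\leq_{\mathscr R}$-upper bound $c$, so $cx=x$ and $cx'=x'$, then $x-x'=c(x-x')\in kB\cdot\mathcal J$. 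The reverse inclusion is the step I expect to be the main obstacle, since it requires seeing that multiplication pushes a difference down to a lower support level while automatically creating an edge there: for a generator $c(a-b)$ with $\sigma(a)=\sigma(b)=Y$, idempotency of $c$ gives $c\cdot(ca)=ca$ and $c\cdot(cb)=cb$, while $\sigma(ca)=\sigma(cb)=\sigma(c)\wedge Y=:Z$, so $c$ is a common $\leq_{\mathscr R}$-upper bound of $ca,cb\in\sigma\inv(Z)$ and $ca-cb$ is an edge difference in $\Gamma_{\mathscr R}(B,Z)$ (or zero). Once this bookkeeping is set up, the idempotent identity $c\cdot ca=ca$ makes it routine.

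Finally I would invoke Lemma~\ref{l:equiv}, with $\equiv$ the full relation on $\sigma\inv(X)$ and $R$ the edge relation of $\Gamma_{\mathscr R}(B,X)$: it says precisely that $W_X=V_X$ if and only if $R$ generates $\equiv$, i.e.\ if and only if $\Gamma_{\mathscr R}(B,X)$ is connected. Comparing the two graded decompositions term by term, $\mathcal J=kB\cdot\mathcal J$ iff $W_X=V_X$ for all $X$ iff every $\Gamma_{\mathscr R}(B,X)$ is connected; dually $\mathcal J=\mathcal J\cdot kB$ iff every $\Gamma_{\mathscr L}(B,X)$ is connected. Hence $kB$ is unital iff all these graphs are connected, i.e.\ iff $B$ is connected, and the criterion is manifestly independent of $k$, which closes the loop on all three equivalences.
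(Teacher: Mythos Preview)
Your proof is correct and follows essentially the same route as the paper: both directions hinge on Proposition~\ref{p:left.id} together with Lemma~\ref{l:equiv}, via the key observation that $c(a-b)=ca-cb$ with $c$ a common $\leq_{\mathscr R}$-upper bound of $ca,cb$, and conversely that any edge difference $x-x'=c(x-x')$ lies in $kB\cdot\mathcal J$. Your use of the $\Lambda(B)$-grading to split the comparison into the pieces $V_X$ versus $W_X$ is a clean repackaging, but the paper achieves the same effect by a single global application of Lemma~\ref{l:equiv} to the relation $\bigcup_X E(\Gamma_{\mathscr R}(B,X))$ inside $\ker\sigma$; the arguments are substantively identical.
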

\begin{proof}
Trivially, (3) implies (2).  Suppose that $\mathbb ZB$ has an identity.   We prove that all the graphs $\Gamma_{\mathscr R}(B,X)$ are connected.  The proof for the $\Gamma_{\mathscr L}(B,X)$ is dual. Letting $\mathcal J=\ker \sigma$, we trivially have that $\mathbb ZB\cdot \mathcal J = \mathcal J$. %since $\mathcal J$ is nilpotent and $\mathbb ZB/\mathcal J$ has an identity.  
Thus $\mathcal J$ is generated by all differences $c(a-b)=ca-cb$ with $a,b,c\in B$ and $\sigma(a)=\sigma(b)$.  Note that $\sigma(ca)=\sigma(cb)$ and $ca,cb\leq_{\mathscr R}c$.  Thus $\mathcal J=\ker \sigma$ is spanned by differences $x-x'$ with $\sigma(x)=\sigma(x')$ and $x,x'\leq_{\mathscr R} y$ for some $y\in B$.  It follows from Lemma~\ref{l:equiv} that the equivalence relation induced by $\sigma$ is generated by all pairs $(x,x')$ such that $\sigma(x)=\sigma(x')$ and $x,x'$ are connected by an edge in $\Gamma_{\mathscr R}(B,\sigma(x))$.  Therefore, if $\sigma(a)=\sigma(b)$, then there is a path from $a$ to $b$ in $\Gamma_{\mathscr R}(B,\sigma(a))$, and hence these graphs are all connected.

Assume (1).  We show that $kB$ has a left identity.  A dual argument  establishes the existence of a right identity, and hence of a two-sided identity.  By Proposition~\ref{p:left.id}, it suffices to show that $\mathcal J=kB\cdot \mathcal J$ %where $\mathcal J=\ker \sigma$,
as $\mathcal J$ is nilpotent and $kB/\mathcal J\cong k\Lambda(B)\cong k^{\Lambda(B)}$ has an identity.  By our connectivity assumption, the equivalence relation induced by $\sigma$ is generated by all pairs $(x,x')$ such that $\sigma(x)=\sigma(x')$ and $x,x'$ are adjacent in $\Gamma_{\mathscr R}(B,\sigma(x))$.  Therefore, by Lemma~\ref{l:equiv}, $\mathcal J$ is spanned by all differences $x-x'$ such that $yx=x$, $yx'=x'$ for some $y\in B$.  Then $x-x'=y(x-x')\in kB\cdot \mathcal J$.  We conclude that $\mathcal J=kB\cdot \mathcal J$, and hence $kB$ has a left identity.  This completes the proof.
\end{proof}

From here on out, we assume that $B$ is connected.  
Let us fix once and for all $f_X\in \sigma\inv(X)$ for each $X\in \Lambda(B)$.  Since $\mathbb ZB$ is unital and $\mathcal J=\ker \sigma=\ker\tau $ is nilpotent, we can find a complete set of orthogonal idempotents $\{e_X\mid X\in \Lambda(B)\}$ such that $\tau(e_X)=\delta_X$~\cite[Proposition~21.25]{LamBook}. 

The left \emph{Sch\"utzenberger representation} associated to $X\in \Lambda(B)$ is the $\mathbb ZB$-module $\mathbb ZB^{\leq_{\mathscr L} f_X}/\mathbb ZB^{<_{\mathscr L}f_X}$.  If $L_X=\sigma\inv(X)\cap Bf_X$, then we can identify the Sch\"utzenberger representation with the free abelian group $\mathbb ZL_X$ with basis $L_X$ and module action defined for $a\in B$ and $b\in L_X$ by
\[a\cdot b = \begin{cases}
    ab, & \text{if}\ \sigma(a)\geq X,\\ 0, & \text{else,}
\end{cases}\]
and we shall do so from now on.   The right Sch\"utzenberger representation associated to $X$ is defined dually. 

\begin{Prop}\label{p:linearindep.schutz}
    There is a $\mathbb ZB$-module homomorphism $\alpha\colon \mathbb ZBe_X\to \mathbb ZL_X$ with $\alpha(be_X) = b$ for $b\in L_X$.  %Consequently, the elements $be_X$ with $b\in L_X$ are $\mathbb Z$-linearly independent.
\end{Prop}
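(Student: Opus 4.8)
The plan is to exploit the standard idempotent correspondence $\Hom_{\mathbb ZB}(\mathbb ZBe_X, M)\cong e_X\cdot M$ for a left $\mathbb ZB$-module $M$. Since $e_X$ is idempotent, $\mathbb ZBe_X$ is a cyclic projective module generated by $e_X$, and a homomorphism out of it is determined by the image $m=\alpha(e_X)$, which must be $e_X$-fixed. Conversely, given any $m\in\mathbb ZL_X$ with $e_X\cdot m=m$, the rule $\alpha(re_X):=r\cdot m$ is well defined: if $re_X=r'e_X$ then $(r-r')\cdot m=(r-r')\cdot(e_X\cdot m)=\bigl((r-r')e_X\bigr)\cdot m=0$, using associativity of the action and $m=e_X\cdot m$. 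So the whole problem reduces to producing an $m\in\mathbb ZL_X$ with $e_X\cdot m=m$ and $b\cdot m=b$ for every $b\in L_X$.

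The first key step is the structural observation that $L_X$ is a left-zero semigroup, i.e. $bc=b$ for all $b,c\in L_X$. Indeed $\sigma(b)=\sigma(c)=\sigma(f_X)=X$ and $b,c\in Bf_X$, so Remark~\ref{rmk:useful} (taking its $a=c$, $b=b$, $c=f_X$) gives $bc=bcf_X=bf_X=b$. Writing $\epsilon\colon\mathbb ZL_X\to\mathbb Z$ for the augmentation that sends every basis element of $L_X$ to $1$, the action of $b\in L_X$ on $m=\sum_{c}\lambda_c c$ therefore collapses to $b\cdot m=\sum_c\lambda_c(bc)=\bigl(\sum_c\lambda_c\bigr)b=\epsilon(m)\,b$. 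Thus the requirement $b\cdot m=b$ for all $b\in L_X$ is equivalent to the single scalar condition $\epsilon(m)=1$.

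It then suffices to exhibit $m$ with $e_X\cdot m=m$ and $\epsilon(m)=1$, and I would simply take $m=e_X\cdot f_X$, noting $f_X\in L_X$. The fixed-point condition is automatic from $e_X^2=e_X$, since $e_X\cdot m=e_X^2\cdot f_X=e_X\cdot f_X=m$. The one genuine computation is $\epsilon(m)=1$. For a single $a\in B$ one checks that $\epsilon(a\cdot f_X)$ equals $1$ when $\sigma(a)\geq X$ and $0$ otherwise: if $\sigma(a)\geq X$ then $a\cdot f_X=af_X$ again lies in $L_X$ (its support is $\sigma(a)\wedge X=X$, and it is fixed on the right by $f_X$), so its augmentation is $1$, while if $\sigma(a)\not\geq X$ the action is $0$. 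This value is precisely $\tau(a)(X)$, the value at $X$ of the characteristic function $\delta_{\Lambda(B)^{\leq\sigma(a)}}$; extending linearly gives $\epsilon(e_X\cdot f_X)=\tau(e_X)(X)=\delta_X(X)=1$.

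With $m=e_X\cdot f_X$ in hand, $\alpha(re_X)=r\cdot m$ is the desired homomorphism, and for $b\in L_X$ we obtain $\alpha(be_X)=b\cdot m=\epsilon(m)\,b=b$. The step I expect to require the most care is the augmentation identity $\epsilon(e_X\cdot f_X)=1$, because that is exactly where the defining property $\tau(e_X)=\delta_X$ of the idempotent $e_X$ is used; everything else is formal once the left-zero identity $bc=b$ on $L_X$ has been recorded.
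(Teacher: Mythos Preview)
Your argument is correct. Both your proof and the paper's hinge on the same two ingredients: the left-zero identity $bc=b$ on $L_X$ (Remark~\ref{rmk:useful}), and the normalization $\tau(e_X)=\delta_X$, which forces the augmentation of the chosen element to be $1$. The difference is in how the element $\alpha(e_X)$ is produced. The paper argues abstractly: it observes that the augmentation $\gamma\colon \mathbb ZL_X\to \mathbb Z\delta_X$ is $\mathbb ZB$-linear, and then invokes projectivity of $\mathbb ZBe_X$ to lift $\tau|_{\mathbb ZBe_X}$ through $\gamma$, obtaining some $\ell_X=\alpha(e_X)$ with $\gamma(\ell_X)=\delta_X$. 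You instead write down the explicit candidate $m=e_X\cdot f_X$ and verify $\epsilon(m)=\tau(e_X)(X)=1$ directly, bypassing the projectivity lifting altogether. Your route is slightly more elementary and has the advantage of giving a concrete formula for $\alpha$; the paper's route is the standard categorical maneuver and makes it transparent that \emph{any} $e_X$-fixed element of augmentation $1$ would do.
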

\begin{proof}
    There is a $\mathbb ZB$-linear map $\gamma\colon \mathbb ZL_X\to \mathbb Z^{\Lambda(B)}\delta_X=\mathbb Z\delta_X$ sending $b\in L_X$ to $\delta_X$.   On the other hand, $\tau$ induces a surjection $\mathbb ZBe_X\to \mathbb Z^{\Lambda(X)}\delta_X$ sending $e_X$ to $\delta_X$.  Since $\mathbb ZBe_X$ is projective, we can find $\alpha\colon \mathbb ZBe_X\to \mathbb ZL_X$ with $\tau=\gamma\alpha$ .  Putting $\ell_X=\alpha(e_X)$, we must have $\gamma(\ell_X) = \delta_X$.  Therefore, $\ell_X = \sum_{a\in L_X}c_aa$ where $\sum_{a\in L_X}c_a=1$.  Now $ba=b$ whenever $\sigma(a)=\sigma(b)=X$ and $a,b\in Bf_X$ by Remark~\ref{rmk:useful}, and so $\alpha(be_X) = b\ell_X =\sum_{a\in L_X}c_aba= \sum_{a\in L_X}c_ab = b$.  The result follows.       
\end{proof}

%We shall need the following construction of a complete set of orthogonal idempotents, due to Saliola~\cite{Saliola,MSS}. See~\cite[Chapter~4.5]{ourmemoirs} for details concerning this construction and its properties.   

%Define inductively
%\begin{align*}
 %   e_{\wh 0} &= f_{\wh 0},\\
  %  e_X &= f_X - \sum_{Y<X} f_Xe_Y.
%\end{align*}
%Then $\{e_X\mid X\in \Lambda(B)\}$ are pairwise orthogonal idempotents summing to $1$ and $\tau(e_X) = \delta_X$, the indicator function of $X$.  In particular, when $k$ is a field, these idempotents are primitive. Notice that $f_Xe_X =e_X=e_Xf_X$, the latter equality using the left regular band identity, for all $X\in \Lambda(B)$.

%
%
%
%We shall need the following lemma which can be extracted from \cite[Thm.~4.2, Cor.~4.4, Prop.~4.5 and Thm~4.6]{ourmemoirs} and the discussion preceding~\cite[Theorem~4.2]{ourmemoirs}.
%
%\begin{Lemma}\label{l:schutz}
%Let $X\in \Lambda(B)$.  
%\begin{enumerate}
%    \item If $\sigma(b)\ngeq X$, then $be_X =0$.
%    \item $e_X=f_X+\sum_{\sigma(b)<X} c_bb$e.
%    \item $kBe_X$ is a free $k$-module with basis $\{be_X\mid \sigma(b)=X\}$.
%    \item $e_XkBe_X=ke_X$.
%    \item $e_YkBe_X=0$ if $Y\ngeq X$.
%\end{enumerate}  %Moreover, $kBe_X$ is a free $k$-module with basis all elements of the form $be_X$ with $\sigma(b)=X$. 
%\end{Lemma}
%Note that (5) is immediate from (1) and (2).

\section{Quivers, path algebras and admissible ideals}
A \emph{quiver} $Q$ is a finite directed graph, with multiple edges and loops allowed.  The set of vertices is denoted $Q_0$ and the set of edges $Q_1$. If $v,w$ are vertices, then $Q_1(v,w)$ will denote the set of edges from $v$ to $w$.  

If $Q$ is a quiver and $k$ is a commutative ring with unit, the \emph{path algebra} $kQ$ has basis consisting of all (directed) paths in $Q$, including an empty path $\varepsilon_v$ at each vertex $v$.  We compose paths from right to left, so that if we write $s(e)$ for the source of an edge and $t(e)$ for the target, then a path has the form $p=e_n\cdots e_2e_1$ where $t(e_i) = s(e_{i+1})$.  We put $s(p) = s(e_1)$ and $t(p)=t(e_n)$.  The multiplication in $kQ$ is then given by taking $pq$ to be the concatenation when $s(p)=t(q)$, and $pq=0$ otherwise. The identity of $kQ$ is the sum of the empty paths.

We define the arrow ideal $J_k$ to be the ideal of $kQ$ with basis all paths of positive length.  When $k=\mathbb Z$ we just write $J$.   Note that the exact sequence
\[0\to J\to \mathbb ZQ\to \mathbb ZQ_0\to 0\] splits over $\mathbb Z$ and hence
\[0\to k\otimes J\to kQ\to kQ_0\to 0\] is exact, whence $k\otimes J= J_k$.  

Let us say that an ideal $I$ of $kQ$ is \emph{admissible} if there exists $n\geq 2$ with $J_k^n\subseteq I\subseteq J_k^2$ and $kQ/I$ is projective over $k$.   This coincides with the standard definition when $k$ is a field~\cite{assem,benson}.  A pair $(Q,I)$ consisting of a quiver $Q$ and an admissible ideal $I$ is called a \emph{bound quiver} presenting the algebra $kQ/I$.

Suppose that $I$ is an admissible ideal of $\mathbb ZQ$.  Then \[0\to I\to \mathbb ZQ\to \mathbb ZQ/I\to 0\] is exact and $\mathbb ZQ/I$ is free abelian, so the sequence splits over $\mathbb Z$.  It follows %that 
\begin{equation*}\label{eq:tensor.over.k}
0\to k\otimes I\to kQ\to k\otimes \mathbb ZQ/I\to 0
\end{equation*}
is exact and $k\otimes \mathbb ZQ/I$ is a free $k$-module.  Moreover, $J_k^m = (k\otimes J)^m$. % as both sides have $k$-basis all path of length at least $m$.  
Therefore, if $J^n\subseteq I\subseteq J^2$, then $J_k^n\subseteq k\otimes I\subseteq J_k^2$, and so $k\otimes I$ is admissible.  We summarize this discussion in the following proposition.

\begin{Prop}\label{p:admiss.ov.z}
    Let $I$ be an admissible ideal of $\mathbb ZQ$.  Then for any commutative ring $k$, one has that $k\otimes I$ is an admissible ideal of $kQ$ and $kQ/(k\otimes I)\cong k\otimes \mathbb ZQ/I$.  
\end{Prop}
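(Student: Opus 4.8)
The plan is to deduce every clause of the proposition from the short exact sequence $0\to I\to \mathbb ZQ\to \mathbb ZQ/I\to 0$ together with the fact that admissibility of $I$ over $\mathbb Z$ forces $\mathbb ZQ/I$ to be a finitely generated projective, hence free, abelian group. First I would record that, being free abelian, $\mathbb ZQ/I$ makes this sequence split over $\mathbb Z$; therefore applying $k\otimes(-)$ (over $\mathbb Z$) keeps it exact and yields
$$0\to k\otimes I\to kQ\to k\otimes \mathbb ZQ/I\to 0.$$
In particular the natural map $k\otimes I\to kQ=k\otimes\mathbb ZQ$ is injective, so I identify $k\otimes I$ with its image, the $k$-span of $I$ in $kQ$; because $I$ is a two-sided ideal of $\mathbb ZQ$ and the product in $kQ$ is the $k$-bilinear extension of that in $\mathbb ZQ$, this image is a two-sided ideal. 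The same sequence gives the isomorphism $kQ/(k\otimes I)\cong k\otimes\mathbb ZQ/I$, and since $\mathbb ZQ/I$ is free abelian the right-hand side is free, hence projective, over $k$; this settles the projectivity requirement in the definition of admissibility.

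Next I would establish the sandwich $J_k^n\subseteq k\otimes I\subseteq J_k^2$. The bookkeeping fact I rely on is that $J^m$ has $\mathbb Z$-basis the paths of length $\geq m$ while $J_k^m$ has $k$-basis the paths of length $\geq m$, so under the identification $kQ=k\otimes\mathbb ZQ$ one has $J_k^m=k\otimes J^m$ (refining the already-noted $J_k=k\otimes J$). Starting from the inclusions $J^n\subseteq I\subseteq J^2$ that witness admissibility over $\mathbb Z$, I apply $k\otimes(-)$. Each quotient in sight --- $\mathbb ZQ/J^2$, which is spanned by the images of the paths of length at most one, together with $J^2/I$ and $I/J^n$, which are subgroups of the free abelian groups $\mathbb ZQ/I$ and $\mathbb ZQ/J^n$ --- is again free abelian, so the inclusions have free cokernels and remain injective after tensoring. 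This produces $J_k^n=k\otimes J^n\subseteq k\otimes I\subseteq k\otimes J^2=J_k^2$, completing the verification that $k\otimes I$ is admissible.

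The one point demanding genuine care is the repeated claim that these inclusions survive $k\otimes(-)$: since the functor is only right exact over $\mathbb Z$, one cannot tensor an arbitrary inclusion and keep injectivity. The remedy is to check in each case that the relevant cokernel is free abelian --- equivalently that the subgroup is a $\mathbb Z$-direct summand of $\mathbb ZQ$ --- after which the inclusion splits over $\mathbb Z$ and base change preserves both exactness and the inclusion. Because all groups involved are finitely generated and every cokernel appearing is a torsion-free quotient of $\mathbb ZQ$ (spanned by a subset of the path basis, or a subgroup of such a free group), this freeness holds uniformly; I would therefore isolate it once as a remark and invoke it for each inclusion rather than re-prove it piecemeal. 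With that in hand, every assertion of the proposition --- that $k\otimes I$ is an ideal, that it is trapped between $J_k^n$ and $J_k^2$, that $kQ/(k\otimes I)$ is projective over $k$, and the displayed isomorphism --- follows immediately.
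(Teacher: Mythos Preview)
Your argument is correct and follows essentially the same route as the paper: use freeness of $\mathbb ZQ/I$ to split the sequence $0\to I\to \mathbb ZQ\to \mathbb ZQ/I\to 0$, tensor with $k$ to get exactness and the displayed isomorphism, and then push the chain $J^n\subseteq I\subseteq J^2$ through $k\otimes(-)$ using $J_k^m=k\otimes J^m$. The paper is terser about the last step, simply noting that $J_k^m=(k\otimes J)^m$ and reading off the inclusions, whereas you take the extra care of checking that the cokernels $J^2/I$ and $I/J^n$ are free abelian so that the inclusions survive base change; this is a valid and slightly more explicit justification of the same point, but not a different idea.
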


The same argument shows that admissibility of an ideal is preserved by base change, in general.  This is one advantage of requiring $kQ/I$ to be $k$-projective. The next folklore theorem, which generalizes the well-known results of Gabriel~\cite{assem} and Bongartz~\cite{Bongartz} over fields, gives another. % justification to require $kQ/I$ to be projective over $k$.  

\begin{Thm}\label{homological.stuff}
    Let $k$ be a commutative ring, $Q$ a quiver and $I$ an admissible ideal. Then $(kQ/J)\varepsilon_v$ and $\varepsilon_w(kQ/J)$ are left and right, respectively,  $kQ/I$-modules for $v,w\in Q_0$ and the following formulas hold:
    \begin{align*}
        \Tor_1^{kQ/I}(\varepsilon_w (kQ/J),(kQ/J)\varepsilon_v) &\cong \varepsilon_w(J/J^2)\varepsilon v\\
        \Tor_2^{kQ/I}(\varepsilon_w(kQ/J),(kQ/J)\varepsilon_v) &\cong \varepsilon_w(I/(IJ+JI))\varepsilon_v\\
        \Tor_{2n+1}^{kQ/I}(\varepsilon_w (kQ/J),(kQ/J)\varepsilon_v) &\cong \varepsilon_w\left(\frac{I^nJ\cap JI^n}{I^{n+1}+JI^nJ}\right)\varepsilon_v \tag{$n\geq 1$}\\
        \Tor_{2n}^{kQ/I}(\varepsilon_w (kQ/J),(kQ/J)\varepsilon_v) &\cong \varepsilon_w\left(\frac{I^n\cap JI^{n-1}J}{I^nJ+JI^n}\right)\varepsilon_v\tag{$n\geq 2$}
    \end{align*}
    Moreover, $\varepsilon_w(J/J^2)\varepsilon_v$ is a free $k$-module with basis the cosets of the arrows from $v$ to $w$.  
\end{Thm}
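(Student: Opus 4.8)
The plan is to compute these $\Tor$ groups by building an explicit projective resolution of the left $kQ/I$-module $M = (kQ/J)\varepsilon_v$ and then applying $\varepsilon_w(kQ/J)\otimes_{kQ/I}(-)$. Write $A = kQ/I$ and $B = kQ/J \cong kQ_0$ (the semisimple-over-$k$ quotient). The module $M = B\varepsilon_v$ is the ``simple'' at $v$, and the standard device is the minimal-style resolution whose terms are $A\otimes_B (J^n/J^{n+1}\text{-type pieces})$. First I would set up the two-step syzygy machinery: the beginning of the resolution is
\begin{equation*}
A\otimes_B (J/J^2)\varepsilon_v \to A\varepsilon_v \to B\varepsilon_v\to 0,
\end{equation*}
where the second map is the projection and the first sends $a\otimes \bar{x}\mapsto ax$ (lifting arrows). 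The key structural input is that, because $I\subseteq J^2$, the arrow ideal $J$ and the relation ideal $I$ interact cleanly modulo higher powers, so that the higher syzygies alternate between pieces built from $J$ and pieces built from $I$.

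The main technical step is to produce, over the general commutative ring $k$, the full projective $A$-resolution of $B\varepsilon_v$ whose $n$-th term is of the form $A\otimes_B V_n$, where $V_0 = B\varepsilon_v$, $V_1 = (J/J^2)\varepsilon_v$, $V_2 = (I/(IJ+JI))\varepsilon_v$, and in general $V_{2n+1}$ and $V_{2n}$ are the ``overlap'' bimodule pieces appearing in the statement. This is exactly the resolution constructed by Bardzell/Anick/Green–Solberg–Zacharia over a field; the forward-looking task is to check that their construction is base-change stable. Here I would lean hard on admissibility in the sense of this paper: since $kQ/I$ is $k$-\emph{projective} (in fact free, with the monomial-type basis of paths modulo $I$), every term $A\otimes_B V_n$ is $A$-projective and, crucially, each $V_n$ is a free $k$-module, so the whole resolution is obtained from the $\mathbb{Z}$-resolution by the exact functor $k\otimes_{\mathbb Z}(-)$. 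Concretely I would first establish everything over $\mathbb Z$, where $\mathbb ZQ/I$ is free abelian, and then invoke Proposition~\ref{p:admiss.ov.z} to base-change: applying $k\otimes_{\mathbb Z}(-)$ to a $\mathbb Z$-free resolution keeps it a resolution and commutes with the $\Tor$ computation.

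Having the resolution, I would apply $\varepsilon_w(kQ/J)\otimes_{A}(-) = \varepsilon_w B\otimes_A(-)$ and identify the resulting complex. Tensoring $\varepsilon_w B\otimes_A (A\otimes_B V_n)$ collapses to $\varepsilon_w B\otimes_B V_n = \varepsilon_w V_n$, because $\varepsilon_w B$ is a right $B$-module and the $A$-action factors through $B$ on the outside; this picks off the $\varepsilon_w(\cdots)\varepsilon_v$ summand, since $V_n$ carries a left $\varepsilon_w$-grading coming from the target vertex. The remaining point is to show the induced differentials in this tensored complex are \emph{zero}, so that $\Tor_n = \varepsilon_w V_n$ on the nose. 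This vanishing is precisely the minimality of the resolution: every differential has image inside $J\cdot(\text{term})$, so after applying $\varepsilon_w B\otimes_A(-) = \varepsilon_w(kQ/J)\otimes(-)$, which kills $J$, the maps become zero. The final sentence, that $\varepsilon_w(J/J^2)\varepsilon_v$ is $k$-free on the cosets of arrows $v\to w$, follows because $J/J^2$ is $\mathbb Z$-free with basis the arrows (length-one paths modulo length $\ge 2$) graded by source and target, and this is base-change stable.

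I expect the main obstacle to be the careful verification of base-change stability of the minimal resolution over an arbitrary commutative $k$ rather than a field: over a field one invokes radical/minimality arguments and Nakayama-type reasoning that are unavailable here. The workaround, which I would emphasize, is to do all the construction over $\mathbb Z$ using only that $\mathbb ZQ/I$ is free abelian and that each overlap module $V_n$ is $\mathbb Z$-free (so that $\Tor$ over $\mathbb Z$ and base change commute), and only afterward tensor with $k$; the admissibility condition ``$kQ/I$ is $k$-projective'' is exactly what guarantees the resolution stays projective and the $\Tor$ computation survives base change, so no field hypothesis is needed.
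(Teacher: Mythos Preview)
Your strategy has the right shape but two concrete gaps. First, the reduction to $\mathbb Z$ does not apply: the theorem is stated for an arbitrary admissible ideal $I\subseteq kQ$ over an arbitrary commutative ring $k$, with no hypothesis that $I=k\otimes I_0$ for some $I_0\subseteq\mathbb ZQ$, so there need be no integral model to base-change from and Proposition~\ref{p:admiss.ov.z} is simply irrelevant here. Second, and more fundamentally, you never actually construct the minimal resolution you intend to use. The references you cite do not supply it in this generality (Bardzell is for monomial ideals; Anick's terms are chains, not the overlap quotients in the statement; the field-theoretic existence of minimal projective resolutions relies on projective covers and Nakayama, which are exactly what is unavailable over a general $k$), and your assertion that ``each $V_n$ is a free $k$-module'' is precisely the crux---the overlap quotients such as $(I^nJ\cap JI^n)/(I^{n+1}+JI^nJ)$ are not obviously $k$-projective, and establishing that they are is essentially equivalent to the $\Tor$ computation you are trying to carry out. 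Without it you cannot conclude that the differentials vanish after tensoring, so the identification $\Tor_n=\varepsilon_w V_n$ is unjustified.

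The paper sidesteps both problems by working directly over $k$ with the Gruenberg resolution, whose terms $P_{2n}=(I^n/I^{n+1})\varepsilon_v$ and $P_{2n+1}=(I^nJ/I^{n+1}J)\varepsilon_v$ are shown to be $kQ/I$-projective using only the fact that if $kQ/I'$ is $k$-projective then $I'$ is $kQ$-projective, iterated along products of $I$ and $J$. This resolution is \emph{not} minimal: after applying $\varepsilon_w(kQ/J)\otimes_{kQ/I}(-)$ one obtains a complex $C_\bullet$ with $C_{2n}=\varepsilon_w(I^n/JI^n)\varepsilon_v$ and $C_{2n+1}=\varepsilon_w(I^nJ/JI^nJ)\varepsilon_v$ whose differentials are generally nonzero (for instance $d_3\colon C_3\to C_2$ is induced by $IJ\hookrightarrow I$ modulo $JI$, which need not vanish). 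The stated formulas arise by computing the homology of $C_\bullet$, not by reading off its terms.
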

\begin{proof}
    We shall freely use the well-known fact~\cite{Webb,MagHomology} that if $I'$ is an ideal with $kQ/I'$ projective over $k$, then $I'$ is projective as a left $kQ$-module. Also, if $P$ is a projective $kQ$-module, then $P/I'P\cong kQ/I'\otimes_{kQ}P$ is a projective $kQ/I'$-module and hence projective over $k$.  In particular,  if $I_1,I_2$ are ideals with $kQ/I_j$ projective over $k$, for $i=1,2$, then $I_2/I_1I_2$ is projective over $k$ and $0\to I_2/I_1I_2\to kQ/I_1I_2\to kQ/I_2\to 0$ is a split exact sequence $k$-modules, and so $kQ/I_1I_2\cong  I_2/I_1I_2\oplus kQ/I_2$ is projective over $k$.
     Finally, note that if $I'$ is an ideal of a ring that is projective as a left module, then $I'e$ is projective for any idempotent $e$.  % $I_1/II_1, I_2/II_2$ and $I_1I_2/II_1I_2$ are projective $kQ/I$-modules as $kQ/I\otimes_{kQ} M\cong M/IM$ for a $kQ$-module $M$.

    Since $kQ/I$ and $kQ/J\cong k^{Q_0}$ are $k$-projective, we then have a Gruenberg projective resolution~\cite{Webb,MagHomology} of  $(KQ/J)\varepsilon_v$ over $kQ/I$ of the form
    \begin{equation*}\label{eq:Gruenberg}
       \cdots\to  (IJ/I^2J)\varepsilon_v\to (I/I^2)\varepsilon_v\to (J/IJ)\varepsilon_v \to (kQ/I)\varepsilon_v\to (kQ/J)\varepsilon_v\to 0
    \end{equation*}
    where the even terms are given by $P_{2n} = (I^n/I^{n+1})\varepsilon_v$ and the odd terms by $P_{2n+1} = (I^nJ/I^{n+1}J)\varepsilon_v$.
    Deleting $(kQ/J)\varepsilon_v$ and tensoring over $kQ/I$ with $\varepsilon_w(kQ/J)$ then yields the chain complex
    \[\cdots\to \varepsilon_w(IJ/JIJ)\varepsilon_v\to \varepsilon_w (I/JI)\varepsilon_v\to \varepsilon_w(J/J^2)\varepsilon_v\to \varepsilon_w(KQ/J)\varepsilon_v\]
    where the even terms are of the form $C_{2n} = \varepsilon_w(I^n/JI^n)\varepsilon_v$ and the odd terms are of the form $C_{2n+1} = \varepsilon_w(I^nJ/JI^nJ)\varepsilon_v$
    using that $I\subseteq J$.  Computing the homology of this chain complex gives the result, where for $n=1,2$, we can simplify to
    \begin{align*}
        \Tor_1^{kQ/I}(\varepsilon_w (kQ/J),(kQ/J)\varepsilon_v) &= \varepsilon_w(J/(I+J^2))\varepsilon_v=\varepsilon_w(J/J^2)\varepsilon_v\\
        \Tor_2^{kQ/I}(\varepsilon_w(kQ/J),(kQ/J)\varepsilon_v) &= \varepsilon_w\left(\frac{I\cap J^2}{IJ+JI}\right)\varepsilon_v = \varepsilon_w\left(\frac{I}{IJ+JI}\right)\varepsilon_v
    \end{align*}
    using twice that $I\subseteq J^2$.  As the final statement is immediate from the definitions, this completes the proof.
\end{proof}

Next we turn to the question of when an algebra is a path algebra modulo an admissible ideal over a principal ideal domain.  The following is a minor generalization of~\cite[Lemma~1.2.8]{benson}; we include a proof for completeness.

\begin{Lemma}\label{l:sur.rad2}
Let $R$ be a ring and $J$ a nilpotent ideal of $R$.  Let $R'$ be a subring such that $R'+J^2=R$.  Then $R'=R$.
\end{Lemma}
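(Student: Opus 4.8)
The plan is to deduce the much stronger containment $J \subseteq R'$, from which the conclusion is immediate: once $J \subseteq R'$ we get $R = R' + J^2 \subseteq R' + J \subseteq R'$, hence $R = R'$. To reach $J \subseteq R'$ I would exploit the nilpotency of $J$ by a bootstrapping argument, proving that $J^m \subseteq R' + J^{m+1}$ for every $m \geq 1$ and then telescoping these containments: feeding them into one another gives $J \subseteq R' + J^k$ for all $k$, and choosing $k$ with $J^k = 0$ yields $J \subseteq R'$.

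The first concrete step is to upgrade the hypothesis $R = R' + J^2$ to the refined decomposition $J = (R' \cap J) + J^2$. Indeed, since $J \subseteq R = R' + J^2$, any $x \in J$ can be written $x = r' + j$ with $r' \in R'$ and $j \in J^2$; because both $x$ and $j$ lie in $J$, the component $r' = x - j$ also lies in $J$, hence in $N := R' \cap J$. Writing $N$ for this subset, which is an additive subgroup contained in $R'$ and therefore satisfies $NR' \subseteq R'$ since $R'$ is a subring, I record the key identity $J = N + J^2$ with $N \subseteq R'$.

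With this in hand I would prove $J^m \subseteq R' + J^{m+1}$ by induction on $m$. The base case $m = 1$ is exactly $J = N + J^2 \subseteq R' + J^2$. For the inductive step, assuming $J^{m-1} \subseteq R' + J^m$, I factor one copy of $J$ using the key identity,
\[
J^m = (N + J^2)J^{m-1} = NJ^{m-1} + J^{m+1},
\]
and then substitute $J^{m-1} \subseteq R' + J^m$ to obtain $NJ^{m-1} \subseteq NR' + NJ^m \subseteq R' + J^{m+1}$, where $NR' \subseteq R'$ and $NJ^m \subseteq J^{m+1}$ because $N \subseteq J$; this closes the induction. The main obstacle is precisely this step: the naive attempts to raise the exponent fail because multiplying the ``large'' subring $R'$ by a power of $J$ does not increase that power. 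The trick that makes it work is to arrange, via $J = N + J^2$, that the factor multiplying the high power of $J$ is $N$, which lies inside $J$ and hence genuinely raises the exponent. Once the induction is established, the telescoping described above together with the nilpotency $J^k = 0$ gives $J \subseteq R'$, and therefore $R = R'$.
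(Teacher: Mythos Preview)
Your proof is correct and follows essentially the same inductive strategy as the paper: both arguments establish $J^m\subseteq R'+J^{m+1}$ for all $m\ge 1$ (the paper phrases this equivalently as $R'+J^n=R$ for all $n\ge 2$) and then invoke nilpotency. Your explicit introduction of $N=R'\cap J$ and the identity $J=N+J^2$ is a clean packaging of what the paper does element-by-element---the paper's $y'\in R'$ with $y-y'\in J^2$ is precisely an element of $N$---so the arguments are really the same proof written at different levels of abstraction.
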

\begin{proof}
We prove by induction that $R'+J^n=R$ for all $n\geq 2$.  It will then follow from nilpotency of $J$ that $R'=R$.  By assumption, $R'+J^2=R$.  Assume that $R'+J^n=R$ with $n\geq 2$.  Let $r\in R$.  Then $r=r'+z$ with $r'\in R'$ and $z\in J^n$, and so it suffices to show that $z\in R'+J^{n+1}$, i.e., that $J^n\subseteq R'+J^{n+1}$.  Let $x\in J^{n-1}$ and $y\in J$.  Then by the inductive assumption, we can find $x'\in R'$ with $x-x'\in J^n$ and by hypothesis we can find $y'\in R'$ with $y-y'\in J^2$, whence $y'=y-(y-y')\in J$.  Then $xy = x(y-y')+ (x-x')y'+x'y'\in R'+J^{n+1}$.  This completes the proof.
\end{proof}

If $A=kQ/I$ with $I$ an admissible ideal, then putting $\mathcal J=J/I$, we have that $\mathcal J$ is a nilpotent ideal (as it contains a power of $J$), $A/\mathcal J\cong k^{Q_0}$ and $\mathcal J/\mathcal J^2\cong J/J^2$ is a finitely generated free $k$-module.  We establish a converse when $k$ is a principal ideal domain, generalizing a result of Gabriel~\cite{assem,benson}.  
\begin{Thm}\label{t:gabriel}
    Let $k$ be a principal ideal domain and $A$ a $k$-algebra such that:
    \begin{enumerate}
        \item $A$ is free over $k$;
        \item there is a nilpotent ideal $\mathcal J$ of $A$ such that 
        \begin{enumerate}
            \item $A/\mathcal J\cong k^X$ for some set $X$;
            \item $\mathcal J/\mathcal J^2$ is a finitely generated free $k$-module. 
        \end{enumerate}
    \end{enumerate}
   Then $A\cong kQ/I$ for a quiver $Q$ and an admissible ideal $I$ where the $Q_0=X$ and $Q_1$ is in bijection with a basis for $\mathcal J/\mathcal J^2$. 

   More precisely, suppose that $\{e_x\mid x\in X\}$ is a complete set of orthogonal idempotents lifting $\{\delta_x\mid x\in X\}$ and $B_{yx}\subseteq e_y\mathcal Je_x$ maps to a basis of the finitely generated free $k$-module $e_y(\mathcal J/\mathcal J^2)e_x$, for $x,y\in X$.   Define a quiver $Q$ by  $Q_0=X$, $Q_1(x,y) =B_{yx}$.  Then $\psi\colon kQ\to A$ given by $\psi(\varepsilon_x) = e_x$ and $\psi(b) =b$ for $b\in B_{yx}$ is surjective with kernel an admissible ideal.  
\end{Thm}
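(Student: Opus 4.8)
The plan is to verify in turn that $\psi$ is a well-defined algebra homomorphism, that it is surjective, and that $I=\ker\psi$ satisfies $J_k^n\subseteq I\subseteq J_k^2$ for some $n\ge 2$ while $kQ/I$ is $k$-free. First I would check well-definedness. Since $\{e_x\mid x\in X\}$ is a complete set of orthogonal idempotents, the assignment $\varepsilon_x\mapsto e_x$ defines a ring homomorphism $kQ_0\to A$; and since each $b\in B_{yx}=Q_1(x,y)$ lies in $e_y\mathcal Je_x$ we have $e_ybe_x=b$, so $b\mapsto b$ is a homomorphism of $kQ_0$-bimodules from the arrow span $kQ_1$ into $A$. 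As $kQ$ is the tensor algebra of the $kQ_0$-bimodule $kQ_1$, these data extend uniquely to the algebra homomorphism $\psi$.

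For surjectivity I would apply Lemma~\ref{l:sur.rad2} with ambient ring $A$, nilpotent ideal $\mathcal J$, and subring $R=\operatorname{im}\psi$. Since $R$ contains every $e_x$ and these reduce to the $\delta_x$ spanning $A/\mathcal J\cong k^X$, we get $R+\mathcal J=A$. Moreover the images of the arrows are precisely a basis of $\mathcal J/\mathcal J^2$: via the Peirce decomposition $\mathcal J/\mathcal J^2=\bigoplus_{x,y}e_y(\mathcal J/\mathcal J^2)e_x$ the sets $B_{yx}$ assemble into such a basis, so $\mathcal J\subseteq R+\mathcal J^2$. Combining these gives $A=R+\mathcal J\subseteq R+\mathcal J^2$, and Lemma~\ref{l:sur.rad2} yields $R=A$.

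It remains to show $I=\ker\psi$ is admissible. Because $\psi$ sends paths of positive length into $\mathcal J$, we have $\psi(J_k)\subseteq\mathcal J$, hence $\psi(J_k^n)\subseteq\mathcal J^n=0$ whenever $\mathcal J^n=0$; thus $J_k^n\subseteq I$. Since $kQ/I\cong A$ is $k$-free, hence $k$-projective, by hypothesis, the one remaining point—and the crux of the argument—is the inclusion $I\subseteq J_k^2$. For this I would show that the induced map $\bar\psi\colon kQ/J_k^2\to A/\mathcal J^2$ is an isomorphism; injectivity of $\bar\psi$ says exactly that $\psi(p)\in\mathcal J^2$ forces $p\in J_k^2$, and in particular $I\subseteq J_k^2$. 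The map $\bar\psi$ fits into a morphism between the split short exact sequences $0\to J_k/J_k^2\to kQ/J_k^2\to kQ_0\to 0$ and $0\to\mathcal J/\mathcal J^2\to A/\mathcal J^2\to A/\mathcal J\to 0$ (both split since $kQ_0$ and $A/\mathcal J\cong k^X$ are $k$-free). On the quotients $\bar\psi$ induces $\varepsilon_x\mapsto\delta_x$, an isomorphism $kQ_0\xrightarrow{\sim}A/\mathcal J$; on the sub-objects it carries the arrow basis of $J_k/J_k^2$ bijectively to the chosen basis of $\mathcal J/\mathcal J^2$, again an isomorphism. The five lemma then gives that $\bar\psi$ is an isomorphism.

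I expect this last step to be the main obstacle: the whole argument hinges on the arrows corresponding \emph{exactly} to a basis of $\mathcal J/\mathcal J^2$, so that $J_k/J_k^2\to\mathcal J/\mathcal J^2$ is an isomorphism rather than merely a surjection. This is what forces the kernel into $J_k^2$ and relies essentially on the Peirce decomposition together with the freeness hypotheses (over a principal ideal domain each Peirce summand $e_y(\mathcal J/\mathcal J^2)e_x$ is a finitely generated summand of a free module, hence free, guaranteeing the required bases $B_{yx}$ exist).
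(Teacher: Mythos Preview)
Your proof is correct and follows essentially the same route as the paper's. The only difference is packaging: where the paper does an explicit element chase for $I\subseteq J_k^2$ (writing $a=\sum c_x\varepsilon_x+\sum d_b b+s$ with $s\in J_k^2$ and reducing modulo $\mathcal J$, then modulo $\mathcal J^2$), you phrase the same two reductions as a short five lemma applied to the map of extensions $0\to J_k/J_k^2\to kQ/J_k^2\to kQ_0\to 0$ and $0\to\mathcal J/\mathcal J^2\to A/\mathcal J^2\to A/\mathcal J\to 0$; and where the paper verifies $\psi$ on paths directly, you invoke the tensor-algebra universal property of $kQ$. (Incidentally, the splitting you mention is not needed for the five lemma.)
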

\begin{proof}
    First note that
    \begin{equation}\label{eq:peirce1}
        \mathcal J/\mathcal J^2 = \bigoplus_{x,y\in X}e_y(\mathcal J/\mathcal J^2)e_x
    \end{equation}
    and hence each $e_y(\mathcal J/\mathcal J^2)e_X$ is a finitely generated free $k$-module, and so we can find $B_{yx}$ as in the theorem statement.    

    It is straightforward to show that $\psi$ can be extended to paths in a unique way making it a homomorphism.
   To show surjectivity of $\psi$ it suffices to verify $\psi(kQ)+\mathcal J^2=A$ by Lemma~\ref{l:sur.rad2}.  Since the $\delta_x$ form a basis for $k^X$, we conclude that $A=\bigoplus_{x\in X}ke_X+\mathcal J\subseteq \psi(kQ)+ \mathcal J$.  Therefore, we just need to check that $\mathcal J\subseteq \psi(kQ)+\mathcal J^2$. But this follows as \[B=\bigcup_{x,y\in X}B_{yx}\] projects to  a basis for $\mathcal J/\mathcal J^2$ by \eqref{eq:peirce1}, and $B\subseteq \psi(kQ)$.  %Using Lemmas~\ref{l:incomparable} and~\ref{l:equal} and that the $e_X$ form a complete set of orthogonal idempotents, we have that 
    %\begin{equation}\label{eq:peirce}
     %   \mathcal J/\mathcal J^2 = \bigoplus_{X<Y}e_Y(\mathcal J/\mathcal J^2)e_X\oplus \bigoplus_{X>Y}e_Y(\mathcal J/\mathcal J^2)e_X.
%    \end{equation}
 %    But Lemma~\ref{l:spanning.set} shows that if $X<Y$ or $X>Y$, then \[e_Y\mathcal Je_X = \psi(\mathbb ZQ(B)_1(X,Y))+ \mathcal J^2.\]  
 %We deduce that $\mathbb ZB=\psi(\mathbb ZQ(B))+\mathcal J^2$, and so $\psi$ is onto.

    We check that $I=\ker \psi$ is admissible.  First note that $kQ/I\cong A$ is a free $k$-module.  Also, we have $\psi(J)\subseteq \mathcal J$ by definition of $\psi$.  Therefore, if $\mathcal J^n=0$ with $n\geq 2$, then $\psi(J^n)\subseteq \mathcal J^n=0$.  Thus $J^n\subseteq I$.

    %We claim that $\psi(Q(B)(X,Y)$ is linearly independent for $X<Y$.  Indeed, $\mathbb Q\otimes \mathcal J$ is the kernel of $\sigma\colon \mathbb QB\to \mathbb Q\Lambda(B)$, as both have basis all differences $x-x'$ with $\sigma(x)=\sigma(x')$.  Now $\rad(\mathbb QB) = \mathbb Q\otimes \mathcal J$.  Therefore, the image of $\psi(Q(B)(X,Y))$ spans $e_Y(\rad(\mathbb QB)/\rad^2(\mathbb QB)e)X$.  But according to~\cite{Saliola,MSS}, $\dim e_Y(\rad(\mathbb QB)/\rad^2(\mathbb QB)e)X=|\pi_0(\Gamma(B^{<f_Y},X))|-1$, and so $\psi(Q(B),(X,Y)$ is linearly independent over $\mathbb Q$, and hence over $\mathbb Z$.

    Suppose now that $a\in I=\ker \psi$.  Write \[a=\sum_{x\in X} c_x\varepsilon_x+\sum_{b\in B}d_bb+s\] with $s\in J^2$.  Then 
    \begin{align*}
    0=\psi(a) = \sum_{X\in \Lambda(B)}c_Xe_X +\sum_{b\in B}d_bb+\psi(s)
    \end{align*}
    and note that $\psi(s)\in \mathcal J^2$.  Therefore, $0=\psi(a)+\mathcal J = \sum_{x\in X}c_xe_X+\mathcal J$, and hence $c_X=0$ for all $X\in \Lambda(B)$ since the $e_X+\mathcal J$ are linearly independent.
    We now obtain 
    \begin{align*}
    0=\psi(a)+\mathcal J^2 = \sum_{b\in B}d_bb +\mathcal J^2.
    \end{align*}
    But $B$ projects to a basis for $\mathcal J/\mathcal J^2$.
    Therefore, the $d_b$ are all $0$, and so $a=s\in J^2$.   Thus $I\subseteq J^2$, completing the proof that $I$ is admissible.
\end{proof}

\begin{Rmk}
    The quiver $Q$ in Theorem~\ref{t:gabriel} is determined up to isomorphism by $A$.  Since $k$ is an integral domain, $\mathcal J$ is the unique maximal nilpotent ideal of $A$ and $A/\mathcal J\cong k^X$ has a unique set of primitive idempotents (necessarily, in bijection with $X$). This set determines $|Q_0|$ since for $I$ admissible and $k$ an integral domain, $J/I$ is the unique maximal nilpotent ideal of $kQ/I$ and $kQ/J\cong k^{Q_0}$.  Then if $e,f$ are two of these primitive idempotents, the cardinality of $Q_1(e,f)$ is the rank of $\Tor_1^A(f(A/\mathcal J),(A/\mathcal J)e)$ by Theorem~\ref{homological.stuff}.  
\end{Rmk}

%The quiver $Q$ is unique such that $A\cong kQ/I$ with $I$ admissible and $J/I$ mapping to $\mathcal J$.
%The quiver $Q$ in the proof of Theorem~\ref{t:gabriel} is unique subject to $A\cong kQ/I$ for an admissible ideal with $J/I$ mapping to $\mathcal J$: $Q_0$ must be in bijection with a complete set of orthogonal idempotents $\{f_x\mid x\in X\}$ of $A/\mathcal J$; and $|Q_1(x,y)|$ must be the rank of $\Tor_1^{A}(f_y(A/\mathcal J),(A/\mathcal J)f_x)$
%\end{Rmk}. 

%\begin{Rmk}
   % One can show more generally, using the full Gruenberg resolution~\cite{MagHomology,Webb}, that \begin{align*}
       % \Tor_{2n+1}^{kQ/I}(\varepsilon_w (kQ/J),(kQ/J)\varepsilon_v) &\cong \varepsilon_w\left(\frac{JI^n\cap I^nJ}{I^{n+1}+JI^nJ}\right)\varepsilon v\\
        %\Tor_{2n}^{kQ/I}(\varepsilon_w(kQ/J),(kQ/J)\varepsilon_v) &\cong \varepsilon_w\left(\frac{I^n\cap JI^{n-1}J}{JI^n+I^nJ}\right)\varepsilon_v
  %  \end{align*}
  %  although we shall not use this.
%\end{Rmk}
\section{The integral quiver presentation}
Fix a finite band $B$.
If $X,Y\in \Lambda(B)$ with $X<Y$, we define  $\Gamma_{\mathscr R}(Y,X)$ be the simple graph with vertex set $L_X$, where if $a\in L_X$, then there is an edge between $a$ and $ba$ for any $b\in B_{\geq Y}$ such that $ba\neq a$. In particular, $a$ and $f_Ya$ are connected by an edge, if they are not equal. There is also an edge between $a,c\in L_X$ if there is $b\geq_{\mathscr R} a,c$ with $\sigma(b)\ngeq Y$.     The graph  $\Gamma_{\mathscr L}(X,Y)$ for $X>Y$, is defined dually.

\begin{Prop}\label{p:lrb.case}
    If $B$ is a left regular band, then $\Gamma_{\mathscr R}(Y,X)$ has the same connected components as the subgraph $\Gamma(Y,X)$ with vertex set $f_YBf_X\cap \sigma\inv(X)$ and adjacency given by having a common upper bound in $B^{<_\mathscr R f_Y}$.
\end{Prop}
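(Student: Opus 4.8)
The plan is to realize the inclusion of vertex sets $f_YBf_X\cap\sigma\inv(X)\subseteq L_X$ as a map inducing a bijection on connected components, with an explicit one‑sided inverse given by left multiplication by $f_Y$. Write $G=\Gamma_{\mathscr R}(Y,X)$, with vertex set $L_X$, and $H=\Gamma(Y,X)$, with vertex set $W=f_YBf_X\cap\sigma\inv(X)$. First I would record that $W\subseteq L_X$, since $f_YBf_X\subseteq Bf_X$, and that $H$ is literally a subgraph of $G$: if $a,c\in W$ have a common $\leq_{\mathscr R}$‑upper bound $b\in B^{<_{\mathscr R}f_Y}$, then $\sigma(b)\leq Y$, and because $\sigma$ preserves strict inequalities and $b<_{\mathscr R}f_Y$ we in fact get $\sigma(b)<Y$, so $\sigma(b)\ngeq Y$; as $b\geq_{\mathscr R}a,c$, this is exactly an edge of $G$ of the second type. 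Hence connectivity in $H$ already implies connectivity in $G$.

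For the reverse I would introduce the retraction $\rho\colon L_X\to W$, $\rho(a)=f_Ya$. Since $\sigma(f_Ya)=Y\wedge X=X$ and $(f_Ya)f_X=f_Y(af_X)=f_Ya$, the element $f_Ya$ indeed lies in $W$, and $\rho$ fixes $W$ pointwise. The key point is that $\rho$ collapses or sends into $H$ every edge of $G$. For an edge of the first type, $a\sim ba$ with $b\in B_{\geq Y}$: here $f_Yb\leq_{\mathscr R}f_Y$ and $\sigma(f_Yb)=Y\wedge\sigma(b)=Y=\sigma(f_Y)$, so $f_Yb=f_Y$ by antisymmetry of $\leq_{\mathscr R}$ on a left regular band, whence $\rho(ba)=f_Yba=f_Ya=\rho(a)$ and the edge collapses. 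For an edge of the second type, with $b\geq_{\mathscr R}a,c$ and $\sigma(b)\ngeq Y$: now $\sigma(f_Yb)=Y\wedge\sigma(b)<Y$, so $f_Yb<_{\mathscr R}f_Y$; moreover $f_Ya=(f_Yb)a$ (from $ba=a$), and idempotency of $f_Yb$ gives $(f_Yb)(f_Ya)=(f_Yb)a=f_Ya$, i.e. $f_Ya\leq_{\mathscr R}f_Yb$, and likewise $f_Yc\leq_{\mathscr R}f_Yb$. Thus $f_Yb\in B^{<_{\mathscr R}f_Y}$ is a common upper bound of $\rho(a),\rho(c)$, so $\rho(a)\sim_H\rho(c)$ (or they coincide).

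Finally I would assemble these observations. Every $a\in L_X$ is joined to $\rho(a)\in W$ by a first‑type edge (the one $a\sim f_Ya$), or equals it, so each component of $G$ meets $W$. If $a,c\in W$ are joined by a $G$‑path, applying $\rho$ termwise and invoking the two computations above produces a walk in $H$ from $\rho(a)=a$ to $\rho(c)=c$, so $a$ and $c$ are connected in $H$; combined with the subgraph remark this gives equivalence of $H$‑ and $G$‑connectivity on $W$. Therefore $W\hookrightarrow L_X$ induces a bijection of connected components, which is the assertion. I expect the main obstacle to be the bookkeeping in the second‑type case — checking that $f_Yb$ is a genuine common upper bound lying strictly below $f_Y$ — since there one must combine the support computation $\sigma(f_Yb)<Y$, the idempotency of $f_Yb$, and the relation $f_Ya=(f_Yb)a$; the first‑type case rests on the clean identity $f_Yb=f_Y$ whenever $\sigma(b)\geq Y$.
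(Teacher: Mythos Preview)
Your proposal is correct and follows essentially the same approach as the paper: both arguments exhibit the retraction $\rho(a)=f_Ya$ from $L_X$ onto $W=f_YBf_X\cap\sigma^{-1}(X)$, check that $\rho$ collapses first-type edges (via $f_Yb=f_Y$ when $\sigma(b)\geq Y$) and sends second-type edges into $H$ (via the common upper bound $f_Yb\in B^{<_{\mathscr R}f_Y}$), and note that every $a\in L_X$ is adjacent or equal to $\rho(a)\in W$. Your write-up is in fact somewhat more explicit than the paper's in justifying the individual steps.
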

\begin{proof}
Clearly $\Gamma(Y,X)$ is a subgraph of $\Gamma_{\mathscr R}(Y,X)$ meeting every connected component since $b$ is adjacent to $f_Yb$ for $b\notin f_YB$.  It then suffices to show that $\Gamma(Y,X)$ is a retract of $\Gamma_{\mathscr R}(Y,X)$ by a simplicial map $\rho$, namely $\rho(b)=f_Xb$.  Indeed, if $b\geq_{\mathscr R} a,c$ with   $\sigma(b)\ngeq Y$, then $f_Yb\in B^{<_\mathscr R f_Y}$ and $f_Ybf_Ya = f_Yba=f_Ya$ and $f_Ybf_Yc= f_Ybc=f_Yc$ because $B$ is a left regular band.  Thus $f_Ya,f_Yc$ are adjacent (if not equal) in $\Gamma(Y,X)$.  If $b\in B_{\geq Y}$ and $a\in L_X$, then $f_Yb=f_Y$ since we are in a left regular band, and so $f_Yba=f_Ya$.    This completes the proof.
\end{proof}

Define a quiver $Q(B)$ as follows:
\begin{align*}
    Q(B)_0 &= \Lambda(B),\\
    |Q(B)_1(X,Y)| &= \begin{cases}|\pi_0(\Gamma_{\mathscr R}(Y, X))|-1, & \text{if}\ X<Y,\\ |\pi_0(\Gamma_{\mathscr L}(X, Y))|-1, & \text{if}\ X>Y,
    \\0, & \text{else.}\end{cases}
\end{align*}
  Note that if $B$ is a left regular band, then $\Gamma_{\mathscr L}(X,Y)$ is automatically connected, and so there are no arrows of the second type. Indeed, if $a,b\in L_X$, then $a,b\leq_{\mathscr L}a$.

It was shown by %Saliola~\cite{Saliola,MSS} 
Margolis and the author~\cite[Corollary~6.7]{DO}, generalizing earlier work of Saliola~\cite{Saliola} for left regular bands,
that the quiver of $kB$ is $Q(B)$ for any field $k$.  Our goal is to show that $\mathbb ZB\cong \mathbb ZQ(B)/I$ where $I$  is an admissible ideal. %$J^n\subseteq I\subseteq J^2$ with $J$ the arrow ideal of $\mathbb ZQ(B)$ and $n\geq 2$. 
It will then follow that $kB$ has not only the same quiver for each field, but also the same presentation as a bound quiver.

\begin{Lemma}\label{l:homology}
   There is a $\mathbb ZB$-module homomorphism \[\beta\colon \mathcal J\mathbb ZL_X\to \til H_0(\Gamma_{\mathscr R}(Y,X))\]given on the basis by $\beta(a-f_Yf_X) = \ov a-\ov{f_Yf_X}$ for $a\in L_X\setminus\{f_Yf_X\}$.  This induces a $\mathbb ZB$-linear map  $\mathcal J\mathbb ZL_X/\mathcal J^2\mathbb ZL_X\to \til H_0(\Gamma_{\mathscr R}(Y,X))$.
\end{Lemma}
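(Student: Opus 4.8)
The plan is to realise $\beta$ as the canonical projection onto a quotient module, and to reduce both assertions to purely combinatorial connectivity statements in the graph $\Gamma_{\mathscr R}(Y,X)$. First I would identify $\mathcal J\mathbb Z L_X$ with the augmentation submodule of $\mathbb Z L_X$, i.e. the kernel of the map $\epsilon\colon\mathbb Z L_X\to\mathbb Z$ sending each basis element to $1$. Indeed, for $a\in L_X$ one has $a=a\cdot f_X$ (module action), since $af_X=a$ and $\sigma(a)=X$, so any difference $a-a'$ with $a,a'\in L_X$ equals $(a-a')\cdot f_X$ with $a-a'\in\ker\sigma=\mathcal J$; conversely $\mathcal J$ visibly sends $L_X$ into such differences. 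Hence $\mathcal J\mathbb Z L_X=\ker\epsilon$ is free abelian on $\{a-f_Yf_X\mid a\in L_X\setminus\{f_Yf_X\}\}$, which is exactly the prescribed basis, so $\beta$ is a well-defined homomorphism of abelian groups. The reduced simplicial chain complex of the graph identifies $\til H_0(\Gamma_{\mathscr R}(Y,X))$ with $\ker\epsilon/N=\mathcal J\mathbb Z L_X/N$, where $N$ is spanned by the boundaries $a-a'$ of edges; under this identification $\beta$ is the quotient map $a-f_Yf_X\mapsto\ov a-\ov{f_Yf_X}$.

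With this reformulation the two claims become: (i) $N$ is a $\mathbb Z B$-submodule of $\mathcal J\mathbb Z L_X$ (which simultaneously defines the $B$-action on $\til H_0$ and makes $\beta$ be $B$-linear), and (ii) $\mathcal J^2\mathbb Z L_X\subseteq N$ (equivalently $\beta(\mathcal J^2\mathbb Z L_X)=0$, yielding the induced map on $\mathcal J\mathbb Z L_X/\mathcal J^2\mathbb Z L_X$). For (i) it suffices to show that if $c\in B$ with $\sigma(c)\geq X$ and $\{p,q\}$ is an edge, then $cp$ and $cq$ lie in one component of $\Gamma_{\mathscr R}(Y,X)$ (if $\sigma(c)\not\geq X$ then $c$ kills the difference). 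For a type-two edge, with common $\leq_{\mathscr R}$-bound $b$ of $p,q$ and $\sigma(b)\not\geq Y$, I would check $cp,cq\leq_{\mathscr R} cb$ (from $bp=p$ one gets $cp=cbp\in cbB$) while $\sigma(cb)\leq\sigma(b)$ is not $\geq Y$, so $\{cp,cq\}$ is again a type-two edge. For a type-one edge $\{a,ba\}$ with $\sigma(b)\geq Y$: if $\sigma(c)\not\geq Y$ then $ca,cba\leq_{\mathscr R} c$ gives a type-two edge; if $\sigma(c)\geq Y$ then $\sigma(c),\sigma(cb)\geq Y$, so $ca$ and $cba=(cb)a$ are each joined to $a$ by a type-one edge, hence to one another. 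This establishes (i).

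For (ii), once $\beta$ is $B$-linear and surjective we have $\beta(\mathcal J^2\mathbb Z L_X)=\mathcal J\cdot\til H_0$, so (ii) is equivalent to the statement that $\mathcal J$ annihilates $\til H_0$, i.e. that the $B$-action on $\til H_0$ factors through the support homomorphism. Concretely I must show that $\sigma(u)=\sigma(v)\geq X$ forces $up$ and $vp$ into one component for every $p\in L_X$. Using connectivity of $B$, the generating set for $\mathcal J=\ker\sigma$ coming from the connectedness of $\Gamma_{\mathscr R}(B,\sigma(u))$ lets me assume $u,v$ have a common $\leq_{\mathscr R}$-upper bound $w$, whence $w\cdot up=(wu)p=up$ and likewise $vp\leq_{\mathscr R}w$. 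Two sub-cases are then routine band bookkeeping (aided by the left-zero law $pq=p$ on $L_X$ from Remark~\ref{rmk:useful} and the identity $uvu=u$ for $\sigma(u)=\sigma(v)$ from Deletion, Proposition~\ref{p:deletion}): if $\sigma(u)=\sigma(v)\geq Y$ then $up\approx p\approx vp$ by type-one edges; and if $w$ can be chosen with $\sigma(w)\not\geq Y$ then $\{up,vp\}$ is a type-two edge.

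The main obstacle is the remaining configuration: $\sigma(u)=\sigma(v)\not\geq Y$ while every common $\leq_{\mathscr R}$-upper bound of $u,v$ has support $\geq Y$. Then no type-two edge links $up$ and $vp$, and since $w\geq_{\mathscr R}u,v$ forces $w$ to fix $up$ and $vp$ on the left, the available type-one edges degenerate; one is driven to bridge the two $\mathscr R$-classes through the type-one neighbours $f_Yup,\,f_Yvp\in L_X$ and to prove that these are connected. This is precisely where the global structure of the band must enter — the linking between the rectangular $\mathscr J$-class $\sigma^{-1}(\sigma(u))$ and the lower $\mathscr J$-classes, together with Clifford's Theorem~\ref{t:clifford} and the connectivity of $B$. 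I expect this bridging step to be the technical heart of the lemma, with part (i) and the easy portion of (ii) reducing to straightforward manipulations of Green's preorders and the swallowing and deletion laws.
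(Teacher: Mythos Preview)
Your setup in part (i) is fine and matches the paper's computations, but the ``main obstacle'' you anticipate in part (ii) is a phantom, and the lemma has no technical heart of the sort you describe. The point you are missing is that the very same type-two edge argument you use in (i) applies to \emph{every} pair $p,q\in L_X$, not just to pairs that were already edges: if $X\leq\sigma(c)$ and $\sigma(c)\ngeq Y$, then for arbitrary $p,q\in L_X$ one has $cp,cq\leq_{\mathscr R}c$ with $\sigma(c)\ngeq Y$, so $cp$ and $cq$ are joined by a type-two edge and $c\cdot(\ov p-\ov q)=0$. Thus any such $c$ annihilates all of $\til H_0$. Combined with your own observation that $c$ with $\sigma(c)\geq Y$ acts as the identity on $\til H_0$ (via type-one edges $p\sim cp$), this shows the induced $B$-action on $\til H_0$ depends only on whether $\sigma(c)\geq Y$; in particular it factors through $\sigma$, so $\mathcal J$ annihilates $\til H_0$ and (ii) follows at once. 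No appeal to connectivity of $B$, to Clifford's theorem, or to any global structure is needed.

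The paper's proof short-circuits even this by reversing the order of operations: rather than taking the quotient action on $\til H_0$ and then proving it factors through $\sigma$, it \emph{defines} the $\mathbb ZB$-module structure on $\til H_0$ up front as the one pulled back along $\tau$ from $\mathbb Z^{\Lambda(B)}$ (so $b$ acts as the identity if $\sigma(b)\geq Y$ and as $0$ otherwise). Annihilation by $\mathcal J=\ker\tau$ is then built in, and the only thing to verify is that $\beta$ is $\mathbb ZB$-linear for this action --- which is precisely the three-case check you carry out in (i). The two approaches are equivalent in content, but yours creates unnecessary work by deferring the identification of the action.
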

\begin{proof}
    Let us put $\til H = \til H_0(\Gamma_{\mathscr R}(Y,X))$ for convenience.  If $\sigma(b)\ngeq X$, then $b$ annihilates both $\mathcal J\mathbb ZL_X$ and $\til H$.  Next suppose that $X\leq \sigma(b)$ and $\sigma(b)\ngeq Y$.  Then $b$ annihilates $\til H$.  We must show that if $a\in L_X$, then $ba-bf_Yf_X = ba-f_Yf_X - (bf_Yf_X -f_Yf_X) \in \ker \beta$.  But $b\geq_{\mathscr R} ba,bf_Yf_X$, and hence $\ov{ba}=\ov{bf_Yf_X}$.  It now follows that $\beta(ba-bf_Yf_X) = \ov{ba}-\ov{f_Yf_X}-(\ov{bf_Yf_X}-\ov{f_Yf_X})=0$.  Finally, consider $b\in B_{\geq Y}$.  If $a\in L_X$, then by construction of $\Gamma_{\mathscr R}(Y,X)$, we have that $a$ is adjacent or equal to $ba$ and $f_Yf_X$ is adjacent or equal to $bf_Yf_X$.  Therefore, $\beta(b(a-f_Yf_X)) = \ov {ba}-\ov{f_Yf_X}-(\ov{bf_Yf_X} - \ov{f_Yf_X}) = \ov a-\ov{f_Yf_X} = b\beta(a-f_Yf_X)$.

    The final statement follows because $\mathcal J$ annihilates $\til H$.
\end{proof}

Next, we analyze $e_Y(\mathcal J/\mathcal J^2)e_X$ in order to show that it is free abelian.

\begin{Lemma}\label{l:put.in.idems}
Let $\sigma(a)=\sigma(b)$ and $X,Y\in \Lambda(B)$.  Then $e_Y(a-b)e_X+\mathcal J^2 = e_Yf_Y(a-b)f_Xe_X+\mathcal J^2$.
\end{Lemma}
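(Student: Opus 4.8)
The plan is to show that replacing $e_Y$ by $e_Yf_Y$ on the left and $e_X$ by $f_Xe_X$ on the right perturbs $e_Y(a-b)e_X$ only by an element of $\mathcal J^2$. The key point will be that $e_Yf_Y\equiv e_Y$ and $f_Xe_X\equiv e_X$ modulo $\mathcal J$, after which the conclusion is a one-line expansion.

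First I would record the two congruences modulo $\mathcal J$. Since $\sigma(a)=\sigma(b)$, we have $a-b\in\ker\sigma=\mathcal J$. Recalling that $\tau$ is the composite of $\sigma$ with the Solomon isomorphism, and hence an algebra homomorphism with $\tau(e_Y)=\delta_Y$ and $\tau(f_Y)=\delta_{\Lambda(B)^{\leq Y}}$, I would compute the pointwise product in $k^{\Lambda(B)}$:
\[\tau(e_Yf_Y)=\delta_Y\cdot\delta_{\Lambda(B)^{\leq Y}}=\delta_Y=\tau(e_Y),\]
since $Y$ is the only element lying in both $\{Y\}$ and $\Lambda(B)^{\leq Y}$. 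As $\ker\tau=\ker\sigma=\mathcal J$, this yields $e_Yf_Y-e_Y\in\mathcal J$. Symmetrically, $\tau(f_Xe_X)=\delta_{\Lambda(B)^{\leq X}}\cdot\delta_X=\delta_X=\tau(e_X)$, so $f_Xe_X-e_X\in\mathcal J$.

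With these in hand I would write $e_Yf_Y=e_Y+u$ and $f_Xe_X=e_X+v$ with $u,v\in\mathcal J$, and expand
\[e_Yf_Y(a-b)f_Xe_X=(e_Y+u)(a-b)(e_X+v)=e_Y(a-b)e_X+u(a-b)e_X+e_Y(a-b)v+u(a-b)v.\]
Because $a-b\in\mathcal J$ and $u,v\in\mathcal J$, each of the last three summands lies in $\mathcal J^2$, using that $\mathcal J$ is a two-sided ideal (so that $u(a-b)$ and $(a-b)v$ already sit in $\mathcal J^2$, and multiplying on the remaining side keeps them there). Hence $e_Yf_Y(a-b)f_Xe_X\equiv e_Y(a-b)e_X\pmod{\mathcal J^2}$, which is precisely the claim.

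There is no serious obstacle here: the only genuine content is the pointwise computation of $\tau(e_Yf_Y)$, which hinges on $\tau$ being multiplicative and on the idempotents being normalized by $\tau(e_X)=\delta_X$. The one point to watch is that the error terms must be forced into $\mathcal J^2$ rather than merely $\mathcal J$; this is automatic once the corrections $u,v$ are seen to lie in $\mathcal J$, since $a-b$ already contributes one factor of $\mathcal J$.
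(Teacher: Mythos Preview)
Your proof is correct and follows essentially the same approach as the paper's: both arguments hinge on the observation that $e_Y-e_Yf_Y$ and $e_X-f_Xe_X$ lie in $\mathcal J$ (via the computation of $\tau$), which combined with $a-b\in\mathcal J$ forces the difference into $\mathcal J^2$. The paper inserts $f_X$ and $f_Y$ in two separate steps rather than expanding all at once as you do, but this is a purely cosmetic difference.
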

\begin{proof}
First we observe that 
\begin{equation}\label{eq:idems.1}
e_Y(a-b)e_X - e_Y(a-b)f_Xe_X = e_Y(a-b)(1-f_X)e_X\in \mathcal J^2,
\end{equation}
as $\tau((1-f_X)e_X) = (1-\delta_{\Lambda(B)^{\leq X}})\delta_X =0$, and $\sigma(a-b) =0$.  A dual argument shows that 
\begin{equation}\label{eq:idems.2}
     e_Y(a-b)f_Xe_X - e_Yf_Y(a-b)f_Xe_X\in \mathcal J^2.
\end{equation}
Putting together \eqref{eq:idems.1} and \eqref{eq:idems.2} yields the desired conclusion.
\end{proof}

Let us first deal with the case $X,Y$ are incomparable.

\begin{Lemma}\label{l:incomparable}
If $X,Y\in \Lambda(B)$ are incomparable,  then $e_Y(\mathcal J/\mathcal J^2)e_X =0$.    
\end{Lemma}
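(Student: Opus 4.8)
The plan is to prove the equivalent statement $e_Y\mathcal Je_X\subseteq \mathcal J^2$. Since $\mathcal J=\ker\sigma$ is spanned by the differences $a-b$ with $\sigma(a)=\sigma(b)$, it suffices to treat $e_Y(a-b)e_X$ for one such pair. First I would apply Lemma~\ref{l:put.in.idems} to replace this, modulo $\mathcal J^2$, by $e_Yf_Y(a-b)f_Xe_X$, the point being that the middle factors $c:=f_Yaf_X$ and $d:=f_Ybf_X$ now have their supports pushed down. Writing $Z=\sigma(a)=\sigma(b)$ and $W=X\wedge Y$, we get $\sigma(c)=\sigma(d)=X\wedge Y\wedge Z\le W$. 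Incomparability of $X$ and $Y$ is used precisely here to guarantee $W<X$ and $W<Y$ \emph{strictly}, so that $\sigma(c)<X$ and $\sigma(c)<Y$.

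The key step is to factor $e_Yce_X$ through the chosen idempotent $f_W$ sitting at the meet. Because $\sigma(c)\le W=\sigma(f_W)$ we have $c\in Bf_WB$, so swallowing (Proposition~\ref{p:deletion}(1)) gives $cf_Wc=c$, whence
\[e_Yce_X=e_Y(cf_Wc)e_X=(e_Ycf_W)(ce_X).\]
Now both factors lie in $\mathcal J$: since $Y\not\le\sigma(c)$ one computes $\tau(e_Yc)=\delta_Y\,\delta_{\Lambda(B)^{\le\sigma(c)}}=0$, so $e_Yc\in\ker\tau=\mathcal J$ and hence $e_Ycf_W\in\mathcal J$; dually, since $X\not\le\sigma(c)$ one gets $\tau(ce_X)=\delta_{\Lambda(B)^{\le\sigma(c)}}\,\delta_X=0$, so $ce_X\in\mathcal J$. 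Therefore $e_Yce_X\in\mathcal J\cdot\mathcal J=\mathcal J^2$, and the identical argument applies to $d$. Combining and then using Lemma~\ref{l:put.in.idems} again gives $e_Y(a-b)e_X\in\mathcal J^2$, and ranging over the spanning differences yields $e_Y\mathcal Je_X\subseteq\mathcal J^2$, i.e. $e_Y(\mathcal J/\mathcal J^2)e_X=0$.

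The single load-bearing point — and the step I would be most careful about — is the factorization $c=cf_Wc$ together with the claim that it lands in $\mathcal J^2$ rather than merely $\mathcal J$. Inserting an idempotent can never by itself raise radical degree, so what actually produces two \emph{genuine} radical factors is that the \emph{two} flanking idempotents $e_Y$ and $e_X$ each annihilate $c$ modulo $\mathcal J$, which in turn rests on $\sigma(c)$ lying strictly below both $X$ and $Y$. This is exactly where incomparability is indispensable, and it explains why the same computation fails to force vanishing when $X$ and $Y$ are comparable — there $W$ coincides with one of them, one of the two factors ceases to be in $\mathcal J$, and a nonzero contribution to $\mathcal J/\mathcal J^2$ (an arrow) can survive.
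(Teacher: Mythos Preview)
Your proof is correct and follows essentially the same route as the paper: reduce via Lemma~\ref{l:put.in.idems} to $c,d\in f_YBf_X$, then split each $e_Yce_X$ as a product of two factors lying in $\mathcal J$. The only difference is that you invoke swallowing to write $c=cf_Wc$, whereas the paper simply uses that $c$ is idempotent (we are in a band) to write $c=cc$, giving $e_Yce_X=(e_Yc)(ce_X)\in\mathcal J^2$ directly; the detour through $f_W$ works but is unnecessary.
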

\begin{proof}
Note that $e_Y\mathcal Je_X$ is spanned by all $e_Y(a-b)e_X$ with $\sigma(a)=\sigma(b)$, and so we must show such elements belong to $\mathcal J^2$.  By Lemma~\ref{l:put.in.idems} it suffices to assume that $a,b\in f_YBf_X$.  Then $e_Y(a-b)e_X = e_Yaae_X-e_Ybbe_X\in\mathcal J^2$ since $\sigma(a)\leq X\wedge Y<X,Y$, and so $\tau(e_Ya)=\delta_Y\delta_{\Lambda(B)^{\leq \sigma(a)}}=0=\tau(ae_X)$ and, similarly,  $\tau(e_Yb)=0=\tau(be_X)$.  
\end{proof}

Next we consider with the case $X=Y$.

\begin{Lemma}\label{l:equal}
Let $X\in \Lambda(B)$.  Then $e_X(\mathcal J/\mathcal J^2)e_X =0$.    
\end{Lemma}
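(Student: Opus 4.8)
The plan is to mirror the proof of Lemma~\ref{l:incomparable}: reduce the spanning set of $e_X\mathcal Je_X$ to generators lying in $f_XBf_X$, and then dispose of them according to their support relative to $X$.

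First I would note that $e_X\mathcal Je_X$ is spanned by the elements $e_X(a-b)e_X$ with $\sigma(a)=\sigma(b)$, so it suffices to show each of these lies in $\mathcal J^2$. Applying Lemma~\ref{l:put.in.idems} with $Y=X$, I may replace $a,b$ by $f_Xaf_X,f_Xbf_X$ modulo $\mathcal J^2$ and hence assume $a,b\in f_XBf_X$; in particular their common support $Z:=\sigma(a)=\sigma(b)$ satisfies $Z\leq\sigma(f_X)=X$. The argument then splits into the cases $Z<X$ and $Z=X$.

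The case $Z<X$ is handled exactly as in Lemma~\ref{l:incomparable}. Since every element of a band is idempotent, $a=aa$, so $e_Xae_X=(e_Xa)(ae_X)$; because $X\nleq Z$ we have $\tau(e_Xa)=\delta_X\delta_{\Lambda(B)^{\leq Z}}=0$, and dually $\tau(ae_X)=0$, so both factors lie in $\mathcal J=\ker\tau$ and $e_Xae_X\in\mathcal J^2$. The same applies to $b$, giving $e_X(a-b)e_X\in\mathcal J^2$.

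The crux is the case $Z=X$, and here the point is that $a$ is forced to equal $f_X$. Indeed $a\in f_XBf_X$ gives $f_Xa=a$ and $af_X=a$, that is $a\leq_{\mathscr R}f_X$ and $a\leq_{\mathscr L}f_X$, while $\sigma(a)=X=\sigma(f_X)$. Since $\sigma$ preserves strict inequalities, $a\leq_{\mathscr R}f_X$ together with equality of supports forces $af_X=f_X$; combined with $af_X=a$ this yields $a=f_X$, and likewise $b=f_X$, so $a-b=0$. Conceptually this is just the triviality of $\mathscr H$-classes in a band. This step is the only genuinely new ingredient, but it is not hard once the one-sided relations $f_Xa=a=af_X$ coming from $a\in f_XBf_X$ are in hand; the only thing to be careful about is to perform the reduction of Lemma~\ref{l:put.in.idems} first, so that $a,b$ really do sit in $f_XBf_X$ before the support dichotomy is invoked.
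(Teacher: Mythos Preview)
Your argument is correct and follows essentially the same approach as the paper. The only cosmetic difference is the order of operations: the paper splits first on $\sigma(a)\ngeq X$ versus $\sigma(a)\geq X$ and, in the latter case, uses the swallowing property $f_Xaf_X=f_X$ directly (valid for any $a$ with $\sigma(a)\geq X$) before invoking Lemma~\ref{l:put.in.idems}, whereas you apply Lemma~\ref{l:put.in.idems} up front and then use $\mathscr H$-triviality in the equal-support case, which amounts to the same identity once $a\in f_XBf_X$.
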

\begin{proof}
    It suffices to show that if $\sigma(a)=\sigma(b)$, then $e_X(a-b)e_X\in \mathcal J^2$.  If $\sigma(a)=\sigma(b)\ngeq X$, then $e_X(a-b)e_X = e_Xaae_X-e_Xbbe_X\in \mathcal J^2$, as $\tau(e_Xa)=0=\tau(ae_X)$ and $\tau(e_Xb)=0=\tau(be_X)$.  If $\sigma(a)=\sigma(b)\geq X$, then $f_Xaf_X=f_X = f_Xbf_X$, and so $f_X(a-b)f_X=0$.  Now, by Lemma~\ref{l:put.in.idems}, we have $e_X(a-b)e_X+\mathcal J^2 = e_Xf_X(a-b)f_Xe_X+\mathcal J^2=\mathcal J^2$, as required.
\end{proof}

The most technical case is when $X,Y$ are comparable.  

\begin{Lemma}\label{l:spanning.set}
    Let $X<Y$ be in $\Lambda(B)$.   Fix a set $T^r_{Y,X}\subseteq f_YBf_X$ of representatives of the connected components of $\Gamma_{\mathscr R}(Y,X)$ that contains $f_Yf_X$.  Then the elements of the form $e_Y(t-f_Yf_X)e_X+\mathcal J^2$ with $t\in T^r_{Y,X}\setminus \{f_Yf_X\}$ form a basis for $e_Y(\mathcal J/\mathcal J^2)e_X$ as a free abelian group.  

    Dually, if $X>Y$ in $\Lambda(B)$, and $T^\ell_{X,Y}\subseteq f_YBf_X$ is a set of representatives of the connected components of $\Gamma_{\mathscr L}(X,Y)$ that contains $f_Yf_X$, then the elements of the form $e_Y(t-f_Yf_X)e_X+\mathcal J^2$ with $t\in T^\ell_{X,Y}\setminus \{f_Yf_X\}$ form a basis for $e_Y(\mathcal J/\mathcal J^2)e_X$ as a free abelian group.
\end{Lemma}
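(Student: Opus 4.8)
The plan is to prove spanning and linear independence separately for the case $X<Y$; the case $X>Y$ then follows by the left–right dual argument (passing to the opposite band). Throughout I will use that $\tau\colon \mathbb ZB\to \mathbb Z^{\Lambda(B)}$ is a ring homomorphism with $\ker\tau=\mathcal J$ and $\tau(e_Z)=\delta_Z$, the band identities of Proposition~\ref{p:deletion}, and the two auxiliary maps $\alpha$ (Proposition~\ref{p:linearindep.schutz}) and $\beta$ (Lemma~\ref{l:homology}).

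For spanning, I first reduce the generators. Any $e_Y(a-b)e_X$ of $e_Y\mathcal Je_X$ with $\sigma(a)=\sigma(b)$ may, by Lemma~\ref{l:put.in.idems}, be replaced modulo $\mathcal J^2$ by one with $a,b\in f_YBf_X$, so $\sigma(a)=\sigma(b)\leq X$. If this common support is strictly below $X$, then $Y\nleq\sigma(a)$ and $X\nleq\sigma(a)$ give $\tau(e_Ya)=0=\tau(ae_X)$, whence $e_Yae_X=(e_Ya)(ae_X)\in\mathcal J^2$ since $a$ is idempotent (and likewise for $b$). Writing $e_Y(a-b)e_X=e_Y(a-f_Yf_X)e_X-e_Y(b-f_Yf_X)e_X$, it follows that $e_Y(\mathcal J/\mathcal J^2)e_X$ is spanned by the classes $e_Y(a-f_Yf_X)e_X+\mathcal J^2$ with $a\in L_X$. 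The key point is then that adjacency in $\Gamma_{\mathscr R}(Y,X)$ forces $e_Y(a-a')e_X\in\mathcal J^2$. For a type-2 edge there is $b\geq_{\mathscr R}a,a'$ with $\sigma(b)\ngeq Y$, so $ba=a$, $ba'=a'$ and $e_Y(a-a')e_X=e_Yb(a-a')e_X$; here $e_Yb\in\mathcal J$ (as $Y\nleq\sigma(b)$) and $(a-a')e_X\in\mathcal J$, a product in $\mathcal J^2$. This case is immediate; the type-1 edges are the substantive one.

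For a type-1 edge $a'=ba$ with $b\in B_{\geq Y}$, I use $\sigma(b)\geq Y$ to get $\tau(e_Yb)=\delta_Y=\tau(e_Y)$, so $j:=e_Y(b-1)\in\mathcal J$ and $e_Y(a-ba)e_X=-\,j\,ae_X$. Splitting $ae_X=(a-f_Yf_X)e_X+f_Yf_Xe_X$, the first term contributes to $\mathcal J^2$, while $jf_Yf_Xe_X=e_Y(bf_Yf_X-f_Yf_X)e_X$ lies in $\mathcal J^2$ because $f_Y\in BbB$ yields the swallowing identity $f_Ybf_Y=f_Y$, so $f_Y(bf_Yf_X-f_Yf_X)f_X=0$ and Lemma~\ref{l:put.in.idems} applies. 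Granting this, a path in $\Gamma_{\mathscr R}(Y,X)$ from a vertex $a$ to its chosen representative $t\in T^r_{Y,X}$ shows $e_Y(a-f_Yf_X)e_X\equiv e_Y(t-f_Yf_X)e_X\pmod{\mathcal J^2}$, and since the class of $f_Yf_X$ is $0$, the classes with $t\in T^r_{Y,X}\setminus\{f_Yf_X\}$ span.

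For linear independence I construct a homomorphism $\theta\colon e_Y(\mathcal J/\mathcal J^2)e_X\to\til H_0(\Gamma_{\mathscr R}(Y,X))$ with $\theta(e_Y(t-f_Yf_X)e_X+\mathcal J^2)=\ov t-\ov{f_Yf_X}$; as these images run over a $\mathbb Z$-basis of the free abelian group $\til H_0(\Gamma_{\mathscr R}(Y,X))$, the spanning classes must be independent, hence a basis. I take $\theta=\beta\circ\alpha$: since $\alpha(e_Yme_X)=e_Ym\ell_X$ with $\ell_X=\alpha(e_X)$, the composite sends $e_Y\mathcal Je_X$ into $\mathcal J\mathbb ZL_X$ and $e_Y\mathcal J^2e_X$ into $\mathcal J^2\mathbb ZL_X\subseteq\ker\beta$, so $\beta\circ\alpha$ factors through $e_Y(\mathcal J/\mathcal J^2)e_X$. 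To evaluate it, $t\ell_X=t$ and $f_Yf_X\ell_X=f_Yf_X$ (Proposition~\ref{p:linearindep.schutz}, Remark~\ref{rmk:useful}) give $\alpha(e_Y(t-f_Yf_X)e_X)=e_Y\cdot(t-f_Yf_X)$ in $\mathbb ZL_X$, and $\mathbb ZB$-linearity of $\beta$ together with $\beta(t-f_Yf_X)=\ov t-\ov{f_Yf_X}$ gives $\theta(\cdots)=e_Y\cdot(\ov t-\ov{f_Yf_X})$. Finally, Lemma~\ref{l:homology} shows $b$ acts on $\til H_0$ as the identity when $\sigma(b)\geq Y$ and as $0$ otherwise, so $\til H_0$ is a $\mathbb Z^{\Lambda(B)}$-module on which $\delta_Y=\tau(e_Y)$ acts as the identity, yielding $\theta(e_Y(t-f_Yf_X)e_X+\mathcal J^2)=\ov t-\ov{f_Yf_X}$ as required. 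The main obstacle is the type-1 edge computation, where the band structure genuinely enters through swallowing and the reduction of Lemma~\ref{l:put.in.idems}; by contrast the independence argument is essentially forced once one observes that $e_Y$ acts as the identity on $\til H_0$.
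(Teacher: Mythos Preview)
Your argument is correct and follows the same overall architecture as the paper: reduce generators via Lemma~\ref{l:put.in.idems} to $a,b\in f_YBf_X$ with support $X$, show adjacency in $\Gamma_{\mathscr R}(Y,X)$ forces $e_Y(a-a')e_X\in\mathcal J^2$, then use $\beta\circ\alpha$ and the fact that $e_Y$ fixes $\til H_0(\Gamma_{\mathscr R}(Y,X))$ for independence.

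The one place you take a longer road is the type-1 edge. You split $ae_X=(a-f_Yf_X)e_X+f_Yf_Xe_X$ and handle the second piece via swallowing and Lemma~\ref{l:put.in.idems}. The paper instead exploits that $b$ is idempotent, so $1-b$ is idempotent in $\mathbb ZB$: writing $e_Y(a-ba)e_X=e_Y(1-b)ae_X=e_Y(1-b)\cdot(1-b)ae_X$, both factors lie in $\mathcal J$ (since $\sigma(b)\geq Y>X=\sigma(a)$ forces $\tau(e_Y(1-b))=0=\tau((1-b)ae_X)$). This is a one-line computation and avoids the detour through swallowing. Your route works too, but the idempotency trick is worth noting.
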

\begin{proof}
    Notice that we can choose such a set of representatives $T^r_{Y,X}$ because there is an edge from $b$ to $f_Yb$  (and dually for $T^\ell_{X,Y}$).
    We prove only the first statement as the proof of the second is dual.  

    Notice first that $e_Y(\mathcal J/\mathcal J^2)e_X$ is generated by all elements of the form $e_Y(a-b)e_X+\mathcal J^2$ with $\sigma(a)=\sigma(b)$ and $a,b\in f_YBf_X$ by Lemma~\ref{l:put.in.idems}. Consequently, $\sigma(a)=\sigma(b)\leq X$.  Suppose that the inequality is strict. % Choose $c\in B$ with $aB=cB$ and $Bc=Bb$ as per Remark~\ref{rmk:J=D}.  Note then that $c\in f_YBf_X$.  We compute using Remark~\ref{rmk:useful} and its left-right dual that
    %\[
     %   e_Y(a-b)e_X = e_Ya(a-c)e_X+e_Y(c-b)be_X\in \mathcal J^2
    %\]
    Then $e_Y(a-b)e_X = e_Yaae_X-e_Ybbe_X\in \mathcal J^2$ as $\tau(e_Ya)=0=\tau(ae_X)$ and $\tau(e_Yb)=0=\tau(be_X)$   because $\sigma(a)=\sigma(b)<X<Y$.
    We conclude that $e_Y(\mathcal J/\mathcal J^2)e_X$ is generated by all elements of the form $e_Y(a-b)e_X+\mathcal J^2$ with $\sigma(a)=\sigma(b)=X$ and $a,b\in f_YBf_X$. 
    
    Next we claim that if $x$ and $x'$ are adjacent in $\Gamma_\mathscr R(Y,X)$, then we have $e_Y(x-x')e_X\in \mathcal J^2$. First suppose  there exists $z$ with $\sigma(z)\ngeq Y$ and with $zx=x$, $zx'=x'$.  %Then $X\leq \sigma(z)<Y$.
    Since $\sigma(x)=\sigma(x')$, it follows that $(x-x')e_X\in \mathcal J$. But $\tau(e_Yz) = \delta_Y\delta_{\Lambda(B)^{\leq \sigma(z)}}=0$, as $\sigma(z)\ngeq Y$.  Thus $e_Yz\in\mathcal J$.  Therefore, $e_Y(x-x')e_X= e_Yz(x-x')e_X\in \mathcal J^2$.  Next suppose that $\sigma(b)\geq Y$ and $a\in L_X$.  Then $e_Y(a-ba)e_X=e_Y(1-b)ae_X=e_Y(1-b)(1-b)ae_X\in \mathcal J^2$ because $\tau(e_Y(1-b))=0$ and $\tau((1-b)ae_X)=0$, as $\sigma(b)\geq Y>X$ and $\sigma(a)=X$.

    It now follows if we have $x_1,x_2,\ldots, x_n$ with $x_i,x_{i+1}$ adjacent in $\Gamma_\mathscr R(Y,X)$, then \[e_Y(x_1-x_n)e_X= \sum_{i=1}^{n-1}e_Y(x_i-x_{i+1})e_X\in \mathcal J^2.\]

    Let us write $\wh x$ for the element of $T^r_{Y,X}$ in the same component as $x$ for $x\in f_YBf_X\cap \sigma\inv(X)$.  %Since $\mathcal J$ is generated by all differences $x-x'$ with $\sigma(x)=\sigma(x')$, and since $be_X =0$, unless $\sigma(b)\geq X$, in which case $be_X=bf_Xe_X$ where $\sigma(bf_X) =X$, it follows that $e_Y\mathcal Je_X=e_Yf_Y\mathcal Je_X$ is spanned by all differences $e_Y(x-x')e_X$ with $\sigma(x)=\sigma(x')=X$ and $x,x'\in B^{<_{\mathscr R}f_Y}$.  
    By the above paragraph, if $\sigma(x)=\sigma(x')=X$ and $x,x'\in f_YBf_X$, then we have that
    \begin{align*}
        e_Y(x-x')+\mathcal J^2 &=  e_Y((x-\wh x)+(\wh x-\wh {x'}) - (x'-\wh{x'}))e_X+\mathcal J^2 \\
        &= e_Y(\wh x -\wh{x'})e_X + \mathcal J^2 \\ &=
        e_Y(\wh x-f_Yf_X)e_X - e_Y(\wh {x'}-f_Yf_X)e_X+\mathcal J^2 
    \end{align*}
    from which it follows that the elements $e_Y(t-f_Yf_X)e_X+\mathcal J^2$ with $t\in T^r_{Y,X}\setminus \{f_Yf_X\}$ generate $e_Y(\mathcal J/\mathcal J^2)e_X$

    Let us now prove that $e_Y(\mathcal J/\mathcal J^2)e_X$ is free abelian with these elements as a basis.   By Proposition~\ref{p:linearindep.schutz}, we have a $\mathbb ZB$-module homomorphism $\alpha\colon \mathcal Je_X\to \mathcal J\mathbb ZL_X$ that takes $(a-b)e_Y$ to $a-b$ if $a,b\in L_X$.   We can compose this with the $\mathbb ZB$-module homomorphism $\beta\colon \mathcal J\mathbb ZL_X\to \til H_0(\Gamma_{\mathscr R}(Y,X))$ from Lemma~\ref{l:homology}.  Moreover, $\mathcal J^2e_X$ is in the kernel of this homomorphism by Lemma~\ref{l:homology}.  Since $e_Y$ fixes $\til H_0(\Gamma_{\mathscr R}(Y,X))$, we have that $\beta\alpha(e_Yje_X) = \beta\alpha(je_X)$ for $j\in \mathcal J$.     %Recall that $\Gamma_{\mathscr R}(Y,X)$ is an induced subgraph of $\Gamma'_{\mathscr R}(Y,X)$ by Proposition~\ref{p:induced.subgraph}.  
    The elements $\ov t-\ov{f_Yf_X}$ with $t\in T^r_{Y,X}\setminus \{f_Yf_X\}$ are linearly independent in $\til H_0(\Gamma_{\mathscr R}(Y,X))$.    But by construction \[\beta\alpha(e_Y(t-f_Yf_X)e_X)=\beta\alpha((t-f_Yf_X)e_X) = \beta(t-f_Yf_X)=\ov t-\ov{f_Yf_X}.\]  Since $\mathcal J^2e_X\subseteq \ker \beta\alpha$, we deduce that the elements $e_Y(t-f_Yf_X)e_X+\mathcal J^2$ of $e_Y(\mathcal J/\mathcal J^2)e_X$ are linearly independent over $\mathbb Z$.  This completes the proof.
    \end{proof}

The following is our main theorem, which leads to a uniform presentation of $kB$ via a bound quiver for all fields $k$.  Previously, it was known only that the quiver was field independent.  

\begin{Thm}\label{t.main}
    Let $B$ be a connected band.  Then there is an isomorphism $\mathbb ZB\cong \mathbb ZQ(B)/I$ for a certain admissible ideal $I$.  
\end{Thm}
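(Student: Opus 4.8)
The plan is to deduce the theorem from the Gabriel-type criterion established in Theorem~\ref{t:gabriel}, applied with $k=\mathbb Z$, $A=\mathbb ZB$, and $\mathcal J=\ker\sigma$. Since $\mathbb Z$ is a principal ideal domain, it suffices to verify the hypotheses of that theorem. The algebra $\mathbb ZB$ is unital because $B$ is connected (Theorem~\ref{t:right.connected}), and it is visibly free as an abelian group, with basis $B$. The ideal $\mathcal J=\ker\sigma$ is nilpotent by Brown's Theorem~\ref{t:brown.nil}, and the support map composed with Solomon's isomorphism gives $\mathbb ZB/\mathcal J\cong \mathbb Z\Lambda(B)\cong \mathbb Z^{\Lambda(B)}$, so we take the indexing set to be $X=\Lambda(B)$. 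Thus the only remaining hypothesis to check is condition~(2)(b): that $\mathcal J/\mathcal J^2$ is a finitely generated free $\mathbb Z$-module.

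For this I would use the Peirce decomposition relative to the complete set of orthogonal idempotents $\{e_X\mid X\in\Lambda(B)\}$ with $\tau(e_X)=\delta_X$ fixed earlier, namely
\[\mathcal J/\mathcal J^2=\bigoplus_{X,Y\in\Lambda(B)}e_Y(\mathcal J/\mathcal J^2)e_X.\]
Each summand has already been determined by the preceding lemmas: it vanishes when $X,Y$ are incomparable (Lemma~\ref{l:incomparable}) and when $X=Y$ (Lemma~\ref{l:equal}), while for $X<Y$ it is free abelian with the explicit basis $\{e_Y(t-f_Yf_X)e_X+\mathcal J^2\mid t\in T^r_{Y,X}\setminus\{f_Yf_X\}\}$, of rank $|\pi_0(\Gamma_{\mathscr R}(Y,X))|-1$, and dually for $X>Y$ of rank $|\pi_0(\Gamma_{\mathscr L}(X,Y))|-1$ (Lemma~\ref{l:spanning.set}). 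Because $\Lambda(B)$ is finite, $\mathcal J/\mathcal J^2$ is a finite direct sum of finitely generated free abelian groups, hence finitely generated and free.

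With every hypothesis in place, Theorem~\ref{t:gabriel} yields an isomorphism $\mathbb ZB\cong \mathbb ZQ/I$ for an admissible ideal $I$, where $Q_0=\Lambda(B)$ and the number of arrows from $X$ to $Y$ equals $\rk\, e_Y(\mathcal J/\mathcal J^2)e_X$. The final step is simply to compare these ranks with the definition of $Q(B)$: by the computation above they agree in every case (zero unless $X,Y$ are comparable, and $|\pi_0(\Gamma_{\mathscr R}(Y,X))|-1$ or $|\pi_0(\Gamma_{\mathscr L}(X,Y))|-1$ according as $X<Y$ or $X>Y$), so $Q=Q(B)$. I do not expect a genuine obstacle at this stage: all the real work---the proof that the Peirce components are free abelian of the prescribed ranks, which hinges on the Sch\"utzenberger-representation map $\alpha$ of Proposition~\ref{p:linearindep.schutz} and the homology map $\beta$ of Lemma~\ref{l:homology}---has been carried out in Lemmas~\ref{l:put.in.idems} and~\ref{l:spanning.set}. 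The theorem is therefore an assembly of these pieces through the abstract Gabriel criterion, the one point requiring care being the correct bookkeeping of arrow directions (an arrow $X\to Y$ corresponding to the component $e_Y(\mathcal J/\mathcal J^2)e_X$) so as to match $Q(B)$ exactly.
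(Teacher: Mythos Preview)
Your proposal is correct and follows essentially the same approach as the paper: apply Theorem~\ref{t:gabriel} with $k=\mathbb Z$, $A=\mathbb ZB$, $\mathcal J=\ker\sigma$, verify the hypotheses via Theorem~\ref{t:right.connected}, Theorem~\ref{t:brown.nil}, and the Peirce decomposition whose pieces are handled by Lemmas~\ref{l:incomparable}, \ref{l:equal}, and \ref{l:spanning.set}, and then identify the resulting quiver with $Q(B)$ by matching the ranks of $e_Y(\mathcal J/\mathcal J^2)e_X$ with the defining arrow counts. The paper's only cosmetic difference is that it fixes the arrow sets to literally be $T^r_{Y,X}\setminus\{f_Yf_X\}$ and $T^\ell_{X,Y}\setminus\{f_Yf_X\}$ up front, so the identification $Q=Q(B)$ is by construction rather than by a rank comparison afterward.
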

\begin{proof}
    For the purposes of this proof we think of $Q(B)$ as follows:
    \begin{align*}
    Q(B)_0 &= \Lambda(B),\\
    Q(B)_1(X,Y) &= \begin{cases}T^r_{Y,X}\setminus \{f_Yf_X\}, & \text{if}\ X<Y,\\ T^\ell_{X,Y}\setminus \{f_Yf_X\}, & \text{if}\ X>Y,\\ 0, & \text{else}\end{cases}
\end{align*}
where $T^r_{Y,X}\subseteq f_YBf_X$ is a set of representatives of the connected components of the graph $\Gamma_{\mathscr R}(Y,X)$ that contains $f_Yf_X$ and $T^\ell_{X,Y}\subseteq f_YBf_X$ is a set of representatives of the connected components of the graph $\Gamma_{\mathscr L}(X,Y)$ that contains $f_Yf_X$.
Using Lemmas~\ref{l:incomparable} and~\ref{l:equal} and that the $e_X$ form a complete set of orthogonal idempotents, we have that 
    \begin{equation*}\label{eq:peirce}
        \mathcal J/\mathcal J^2 = \bigoplus_{X<Y}e_Y(\mathcal J/\mathcal J^2)e_X\oplus \bigoplus_{X>Y}e_Y(\mathcal J/\mathcal J^2)e_X.
    \end{equation*}
Each summand is finitely generated free abelian by Lemma~\ref{l:spanning.set}, whence $\mathcal J/\mathcal J^2$ is finitely generated free abelian.  The summands with $X<Y$ have bases   $\{e_Y(t-f_Yf_X)e_X\mid t\in T^r_{Y,X}\setminus\{f_Yf_X\}\}$ and those with $X>Y$ have   bases $\{e_Y(t-f_Yf_X)e_X\mid t\in T^{\ell}_{X,Y}\setminus\{f_Yf_X\}\}$.  Since $\mathbb ZB/\mathcal J\cong \mathbb Z^{\Lambda(B)}$, Theorem~\ref{t:gabriel} applies to give an isomorphism $\mathbb ZQ(B)/I\to \mathbb ZB$ with $I$ admissible, induced by $\psi\colon \mathbb ZQ(B)\to \mathbb ZB$ with $\psi(\varepsilon_X)=e_X$ and $\psi(t) = e_Y(t-f_Yf_X)e_X$ for $t\in T^r_{Y,X}\setminus \{f_Yf_X\}$ with $X<Y$ or $t\in T^\ell_{X,Y}\setminus \{f_Yf_X\}$ with $X>Y$.  This completes the proof.  
\end{proof}

\begin{Thm}\label{t:main2}
    Let $B$ be a connected band.  Then there is an admissible ideal $I$ in $\mathbb ZQ(B)$ such that, for any commutative ring $k$ (and, in particular, all fields), $k\otimes I$ is an admissible ideal of $kQ(B)$ and $kB\cong kQ(B)/(k\otimes I)$.   
\end{Thm}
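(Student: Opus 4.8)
The plan is to reduce everything to the integral case already established in Theorem~\ref{t.main} and then base-change. First I would invoke Theorem~\ref{t.main} to obtain the quiver $Q(B)$ and an admissible ideal $I$ of $\mathbb ZQ(B)$ with $\mathbb ZB\cong \mathbb ZQ(B)/I$. This is exactly the ideal $I$ whose existence the present theorem asserts, so no new ideal needs to be constructed; the statement to be proved is purely a base-change consequence of the integral isomorphism.

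Next, to pass to an arbitrary commutative ring $k$, I would apply Proposition~\ref{p:admiss.ov.z} directly to this $I$. That proposition tells us that $k\otimes I$ is an admissible ideal of $kQ(B)$ and that $kQ(B)/(k\otimes I)\cong k\otimes(\mathbb ZQ(B)/I)$. Combining this with the integral isomorphism $\mathbb ZB\cong \mathbb ZQ(B)/I$ of the previous step yields $kQ(B)/(k\otimes I)\cong k\otimes \mathbb ZB$, with all tensor products taken over $\mathbb Z$.

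The only remaining point is to identify $k\otimes \mathbb ZB$ with the semigroup algebra $kB$. Since $\mathbb ZB$ is the free abelian group on the set $B$ with multiplication extending that of $B$, tensoring over $\mathbb Z$ with $k$ produces the free $k$-module on $B$ with the same structure constants, which is precisely $kB$. The comparison map is an isomorphism of $k$-\emph{algebras}, and not merely of $k$-modules, because $-\otimes_{\mathbb Z}k$ is a monoidal functor that carries the bilinear multiplication of $\mathbb ZB$ to that of $kB$. Chaining the isomorphisms gives $kB\cong kQ(B)/(k\otimes I)$, as required, and the specialization to fields is the stated particular case.

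There is no genuine obstacle at this stage: all of the difficulty has already been absorbed into Theorem~\ref{t.main}, which produces the integral admissible ideal, and into Proposition~\ref{p:admiss.ov.z}, which shows that admissibility is preserved under base change; the latter in turn relied on the $\mathbb Z$-splitting of the defining short exact sequence together with the requirement that $kQ/I$ be $k$-projective as part of our definition of admissibility. The one mild point to be careful about is insisting that $k\otimes \mathbb ZB\cong kB$ be read as an isomorphism of $k$-algebras, but this is immediate from the functoriality of $-\otimes_{\mathbb Z}k$ on rings.
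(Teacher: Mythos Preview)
Your proposal is correct and follows exactly the paper's own proof: invoke Theorem~\ref{t.main} to obtain the integral admissible ideal, apply Proposition~\ref{p:admiss.ov.z} to base-change, and identify $k\otimes \mathbb ZB\cong kB$. The only difference is that you are more explicit about the algebra isomorphism $k\otimes \mathbb ZB\cong kB$, which the paper treats as obvious.
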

\begin{proof}
    The existence of an admissible ideal $I$  with $\mathbb ZQ(B)/I\cong \mathbb ZB$ is from Theorem~\ref{t.main}.  Proposition~\ref{p:admiss.ov.z} tells us that $k\otimes I$ is an admissible ideal of $kQ(B)$ and $kQ(B)/(k\otimes I)\cong k\otimes \mathbb ZQ(B)/I\cong k\otimes \mathbb ZB\cong kB$, completing the proof.
\end{proof}

Theorem~\ref{t:main2} provides a uniform quiver presentation of $kB$ over all fields.

\begin{Cor}\label{cor:hereditary}
      Let $B$ be a connected band. Then the following are equivalent.
      \begin{enumerate}
          \item $Q(B)$ is acyclic and $\mathbb ZB\cong \mathbb ZQ(B)$.
          \item $kB$ is hereditary for some field $k$.
          \item $kB$ is hereditary for all fields $k$.
      \end{enumerate}
\end{Cor}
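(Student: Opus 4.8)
The plan is to prove the cycle $(1)\Rightarrow(3)\Rightarrow(2)\Rightarrow(1)$, leaning on the field-independent presentation already in hand. For $(1)\Rightarrow(3)$ I would assume that the admissible ideal $I$ of Theorem~\ref{t.main} is zero (which is what $\mathbb ZB\cong \mathbb ZQ(B)$ says) and that $Q(B)$ is acyclic. Since $\mathbb ZQ(B)$ is free abelian, tensoring gives $k\otimes \mathbb ZQ(B)\cong kQ(B)$, and Theorem~\ref{t:main2} with $I=0$ yields $kB\cong kQ(B)$. The path algebra of a finite acyclic quiver is hereditary over every field (a classical fact, see~\cite{assem}), so $(3)$ holds. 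The implication $(3)\Rightarrow(2)$ is immediate.

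The substance is $(2)\Rightarrow(1)$. Suppose $kB$ is hereditary for some field $k$, and write $A=kB\cong kQ(B)/I_k$ with $I_k=k\otimes I$ admissible by Theorem~\ref{t:main2}. Over the field $k$ we have $kQ(B)/J_k\cong k^{Q(B)_0}$ semisimple, so the modules $(kQ(B)/J_k)\varepsilon_v$ are exactly the simple $A$-modules, and $A$ is hereditary precisely when its global dimension (equivalently, weak dimension, as $A$ is Artinian) is at most $1$, i.e.\ when $\Tor_2^{A}$ between every pair of simples vanishes. By the second displayed formula of Theorem~\ref{homological.stuff}, that $\Tor_2$ is $\varepsilon_w\bigl(I_k/(I_kJ_k+J_kI_k)\bigr)\varepsilon_v$, so the hereditary hypothesis is equivalent to the single identity $I_k=I_kJ_k+J_kI_k$.

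What remains is the ring-theoretic fact that an admissible ideal $I'$ of $kQ$ satisfying $I'=I'J_k+J_kI'$ must vanish and force $Q$ acyclic; I expect this to be the main obstacle, though it is short. Since $I'\subseteq J_k^2$, one gets $I'J_k+J_kI'\subseteq J_k^3$, and inductively, if $I'\subseteq J_k^n$ then $I'=I'J_k+J_kI'\subseteq J_k^{n+1}$; hence $I'\subseteq J_k^n$ for all $n\geq 2$. Admissibility provides $m\geq 2$ with $J_k^m\subseteq I'\subseteq J_k^{m+1}$, so $J_k^m=J_k^{m+1}$. As $J_k^m/J_k^{m+1}$ has basis the paths of length $m$, there are no paths of length $m$, hence none of length $\geq m$ (a longer path would have an initial segment of length $m$), so $J_k^m=0$ and $Q$ is acyclic; and then $I'\subseteq J_k^m=0$. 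Applying this to $I'=I_k$ gives $I_k=0$ and $Q(B)$ acyclic.

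Finally I would descend from $I_k=0$ to $I=0$: the ideal $I$ is a subgroup of the free abelian group $\mathbb ZQ(B)$, hence itself free abelian, and $k\otimes I=I_k=0$ over a field forces the rank of $I$ to be $0$, so $I=0$ and $\mathbb ZB\cong \mathbb ZQ(B)$. Together with the acyclicity of $Q(B)$ this is exactly $(1)$, closing the cycle. The only points needing care are the standard homological input (hereditary $\iff$ vanishing of $\Tor_2$ between simples, valid because $A$ is Artinian with separable semisimple quotient $k^{Q(B)_0}$) and the length-grading argument converting $I'=I'J_k+J_kI'$ into $I'=0$, both of which are routine given Theorem~\ref{homological.stuff}.
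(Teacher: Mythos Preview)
Your proof is correct. The overall architecture---the cycle $(1)\Rightarrow(3)\Rightarrow(2)\Rightarrow(1)$ and the descent from $k\otimes I=0$ to $I=0$ via freeness of $I$---matches the paper's. The genuine difference is in the heart of $(2)\Rightarrow(1)$: the paper simply invokes Gabriel's theorem (cited from~\cite{assem,benson}) as a black box to conclude that a hereditary algebra presented as $kQ(B)/(k\otimes I)$ with $k\otimes I$ admissible must have $Q(B)$ acyclic and $k\otimes I=0$. You instead \emph{derive} this from the paper's own Theorem~\ref{homological.stuff}: hereditary forces $\Tor_2$ between simples to vanish, hence $I_k=I_kJ_k+J_kI_k$, and then your length-grading induction squeezes $I_k$ into $\bigcap_n J_k^n$ while admissibility pins $J_k^m\subseteq I_k$, giving $J_k^m=J_k^{m+1}$ and hence no paths of length $m$. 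This is a pleasant, self-contained argument that avoids the external citation and actually puts Theorem~\ref{homological.stuff} to work; the paper's version is shorter but leans on the literature. One small wording point: in $(1)\Rightarrow(3)$ you do not need to argue that the \emph{specific} $I$ from Theorem~\ref{t.main} vanishes---tensoring any isomorphism $\mathbb ZB\cong\mathbb ZQ(B)$ already gives $kB\cong kQ(B)$, which is all you use.
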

\begin{proof}
    Note that (1) implies (3) as $kB\cong k\otimes \mathbb ZB\cong k\otimes \mathbb ZQ(B)\cong kQ(B)$ is hereditary~\cite{assem,benson}.  Trivially, (3) implies (2).  Suppose that (2) holds and let $I$ be an admissible ideal such that $\mathbb ZB\cong \mathbb ZQ(B)/I$ as per Theorem~\ref{t:main2}.    Now $kB\cong kQ(B)/(k\otimes I)$ and $k\otimes I$ is admissible by Theorem~\ref{t:main2}.  A theorem of Gabriel~\cite{assem,benson}  then tells us that $Q(B)$ is acyclic and $k\otimes I=0$.  Since $I$ is free abelian, we deduce that $I=0$, and so $\mathbb ZB\cong \mathbb ZQ(B)$.
    %Since $Q(B)$ is acyclic, the arrow ideal $J$ is nilpotent and hence $I\subseteq J^2$ is nilpotent.  Now $kB\cong kQ(B)/k\otimes I$ by Theorem~\ref{t:main2} and $k\otimes I$ is nilpotent, and hence contained in the radical of $kQ(B)$.  By~\cite[Lemma~4.2.1]{benson}, it follows that $k\otimes I=0$, and hence $I=0$ as it is a finitely generated free abelian group.  This establishes that (2) implies (1), completing the proof.    
\end{proof}

Note that (1) holds if and only if $Q(B)$ is acyclic and the number of paths in $Q(B)$ is $|B|$.

The equivalent conditions of Corollary~\ref{cor:hereditary} are satsified when $B$ is a left regular band with identity and the Hasse diagram of $B$ is a rooted tree; see~\cite{MSS}.

We end this section with an example.
\begin{Example}
 Let $B$ be the monoid with presentation \[\langle a,b\mid a^2=a,b^2=b, aba=a, bab=b\rangle.\]  It is straightforward to verify that $B$ is a $5$-element band with elements $\{1,a,b,ab,ba\}$.  The support semilattice is the two-element lattice $\wh 0,\wh 1$, where $a,b,ab,ba$ have support $\wh 0$ and $1$ has support $\wh 1$.  We take $f_{\wh 0}=a$ and $f_{\wh 1}=1$.  Then $\Gamma_{\mathscr R}(\wh 1,\wh 0)$ has vertices $a,ba$ and no edges, and $\Gamma_{\mathscr L}(\wh 1,\wh 0)$ has vertices $a,ab$ and no edges.  It follows that $Q(B)$ is as depicted in Figure~\ref{fig:quiver} where $\alpha$ corresponds to $ab$ and $\beta$ to $ba$ in the labeling in the proof of Theorem~\ref{t.main}.
 \begin{figure}
     \centering
    \begin{tikzpicture}[->,shorten >=1pt,%
auto,node distance=2cm,semithick,
inner sep=5pt,bend angle=45]
%\tikzset{every state/.style={minimum size=30pt}}.
\node[state] (A) {$\wh 0$};
\node[state] (B) [right of=A] {$\wh 1$};
%\tikzstyle{every node}=[font=\footnotesize]
\path
      (A) edge [bend right] node [below]  {$\beta$} (B)
      (B) edge [bend right] node [above] {$\alpha$} (A);
\end{tikzpicture}
     \caption{The quiver of a $5$-element band}
     \label{fig:quiver}
 \end{figure}
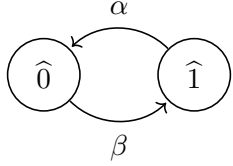
 We may take $e_{\wh 0} =f_{\wh 0}=a$ and $e_{\wh 1}=1-e_{\wh 0} = 1-a$.  Then we have
 \begin{align*}
 \psi(\varepsilon_{\wh 0}) = a,
 \psi(\varepsilon_{\wh 1}) = 1-a,
 \psi(\beta) = ba-a,
 \psi(\alpha) = ab-a. 
 \end{align*}
 Also notice that $\psi(\alpha\beta) = (ab-a)(ba-a) = 0$.  Thus $\alpha\beta\in I=\ker\psi$.  We claim that $I = (\alpha\beta)$.  Indeed, $\mathbb ZQ(B)/(\alpha\beta)$ is spanned by the cosets of the paths $\varepsilon_{\wh 0}$, $\varepsilon_{\wh 1}$, $\alpha$, $\beta$ and $\beta\alpha$.  These map under $\psi$ to $a$, $1-a$, $ab-a$, $ba-a$, $(ba-a)(ab-a) = b-ba-ab+a$, respectively.  These elements form a basis for $\mathbb ZB$, and so it follows that $\mathbb ZQ(B)/(\alpha\beta)$ maps isomorphically under the map induced by $\psi$ to $\mathbb ZB$.  This proves that $I = (\alpha\beta)$.  
\end{Example}

\section{CW left regular bands}
A left regular band $B$ is called a \emph{CW left regular band} if $B_{\geq X}$ equipped with $\leq_{\mathscr R}$ is the face poset of a regular CW complex for each $X\in \Lambda(B)$.  Examples include left regular bands associated to hyperplane arrangements, oriented matroids, CAT(0) cube complexes and complexes of oriented matroids (COMs).  A detailed discussion of CW left regular bands can be found in~\cite[Chapter~3]{ourmemoirs}.

It is shown in~\cite[Theorem~6.2]{ourmemoirs} that if $B$ is a connected CW left regular band, then $Q(B)$ is the Hasse diagram of $\Lambda(B)$ (with edges oriented upward) and that, for any field $k$, one has that $kB\cong kQ(B)/I$ where $I$ is generated by the sum of all paths of length $2$ in $Q(B)$.  We show in this section that $\mathbb ZB\cong \mathbb ZQ(B)/I$ where $I$ is generated by the sum of all paths of length $2$, and that this ideal is admissible.   This answers the question at the end of page~98 of~\cite{ourmemoirs}.

We assume that our idempotents $e_X$ are constructed as per~\cite[Theorem~4.2]{ourmemoirs}.
Let $\Sigma(B)$ be the unique (up to isomorphism) regular CW complex with face poset $B$.  We view the cellular chain complex with $\mathbb Z$-coefficients as a differential abelian group $C_\bullet(\Sigma(B))=\bigoplus_{q\geq 0} C_q(\Sigma(B))$.  Note that $B$ acts on $\Sigma(B)$ by regular cellular maps with the induced action on the face poset just the action via left multiplication. Moreover, the cells of $\Sigma(B)$ can be oriented so that the action of $B$ is orientation preserving. See Chapters~3 and ~5 of~\cite{ourmemoirs} for details.   There is then an isomorphism of $\mathbb ZB$-modules \[\Phi\colon C_\bullet(\Sigma(B))\to \mathbb ZB\] sending an oriented
cell $b\in B$ to $be_{\sigma(b)}$; see Page 97 of~\cite{ourmemoirs}.  We can define a differential $\partial$ on $\mathbb ZB$ by transporting the differential $d$ on the cellular chain complex, that is, $\partial = \Phi\circ d\circ\Phi\inv$.  We recall that $d(b) = \sum_{x\prec b}[b:x]x$ where $x\prec b$ means $b$ covers $x$ and $[b:x]=\pm 1$ is the incidence number.

We shall freely use in what follows that $B$ and $\Lambda(B)$ are graded posets and $\sigma\colon B\to \Lambda(B)$ preserves ranks, cf.~\cite[Prop.~3.8, Cor.~3.9]{ourmemoirs}.

\begin{Lemma}\label{l:opp.signs}
    Let $b\in B$ and suppose that $\sigma(b)$ covers $X$.  Then $b$ covers exactly two elements with support $X$ and $d(b)$ gives them opposite signs.  
\end{Lemma}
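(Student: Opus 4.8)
The plan is to split the statement into the combinatorial claim (that $b$ covers exactly two faces of support $X$) and the orientation claim (that they receive opposite signs), and to reduce both to the structure of $b$ regarded as an edge in the regular CW complex attached to $B_{\geq X}$.

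First I would record that, since $\sigma$ preserves ranks and $\sigma(b)$ covers $X$, any $x\leq_{\mathscr R}b$ with $\sigma(x)=X$ has rank one less than $b$ and is therefore covered by $b$; conversely every such $x$ has $\sigma(x)=X\geq X$, so it lies in $B_{\geq X}$. Thus the faces of $b$ of support $X$ are exactly the $x\in B_{\geq X}$ with $x\leq_{\mathscr R}b$ and $\sigma(x)=X$ (equivalently, using that $[X,\sigma(b)]=\{X,\sigma(b)\}$ and that $\sigma$ preserves strict inequalities, the proper faces $y\leq_{\mathscr R}b$ with $\sigma(y)\geq X$ are precisely these). Now I would pass to the regular CW complex $\Sigma(B_{\geq X})$ furnished by the definition of a CW left regular band, whose face poset is $(B_{\geq X},\leq_{\mathscr R})$. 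Because $\sigma$ is rank preserving and $\sigma(b)$ covers $X$ in $\Lambda(B)$, the element $b$ is a $1$-cell of $\Sigma(B_{\geq X})$, and the faces just described are precisely its $0$-cell endpoints. A $1$-cell of a regular CW complex is an arc, whose closure has exactly two distinct boundary vertices; this gives the first assertion, and I name the two faces $x_1$ and $x_2$.

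For the signs, I would isolate the part of $d(b)=\sum_{x\prec b}[b:x]x$ supported on $\{x:\sigma(x)=X\}$, namely $[b:x_1]x_1+[b:x_2]x_2$, and identify it with the cellular boundary of the arc $b$ in $\Sigma(B_{\geq X})$. There the boundary of a $1$-cell is the difference of its two endpoints; equivalently, the closed arc $\overline{b}$ is contractible, which forces its two $0$-faces to be homologous and hence to appear with opposite signs. It then remains to transport this sign relation back to $\Sigma(B)$, where $x_1,x_2$ are codimension-one faces rather than vertices. Here I would invoke the orientation conventions of \cite{ourmemoirs}, under which the cells of $\Sigma(B)$, and likewise those of $\Sigma(B_{\geq X})$, are oriented compatibly with the respective band actions, so that the incidence numbers $[b:x_i]$ computed in $\Sigma(B)$ agree, up to a single global sign coming from the orientation of $b$, with the incidence numbers of the arc $b$ with its endpoints in $\Sigma(B_{\geq X})$.

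The main obstacle is exactly this transport of orientations, and I expect it to be the only genuine difficulty. The property ``opposite signs'' is not homological: reversing the chosen orientation of a single $x_i$ flips one of $[b:x_1],[b:x_2]$ without touching the other, and indeed, collapsing the subcomplex of faces of $b$ of support $\ngeq X$ shows that the relative homology of the resulting pair cannot distinguish the two sign patterns (the relevant cokernel is $\mathbb Z$ either way). Consequently the statement truly depends on the specific, action-compatible orientation fixed in \cite{ourmemoirs}, and the crux is to verify that the orientation carried by $x_i$ as a codimension-one cell of $\Sigma(B)$ matches the canonical positive orientation it carries as a $0$-cell of $\Sigma(B_{\geq X})$; in the language of the isomorphism $\Phi$, this is the compatibility of the identifications built for $B$ and for $B_{\geq X}$. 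Once that compatibility is established, the opposite-sign conclusion follows immediately from the arc computation.
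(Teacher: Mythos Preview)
Your combinatorial argument for ``exactly two'' is fine and matches the paper's: $b$ is a $1$-cell of $\Sigma(B_{\geq X})$, hence an arc with two distinct endpoints, which are the faces of support $X$.

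The sign argument, however, has a genuine gap where you yourself locate it. You need the incidence numbers $[b:x_i]$ in $\Sigma(B)$ to agree (up to one global sign) with the arc-endpoint incidences in $\Sigma(B_{\geq X})$, but nothing in the setup guarantees such a comparison: the orientations on $\Sigma(B)$ are only specified up to a cell-by-cell choice compatible with the $B$-action, and there is no a priori functorial relation between the chosen orientations on $\Sigma(B)$ and on $\Sigma(B_{\geq X})$. As you note, the ``opposite signs'' conclusion is not homological, so it cannot be recovered from the mere existence of a cellular map $\Sigma(B_{\geq X})\hookrightarrow \Sigma(B)$; it depends on the actual orientations, and your proposal does not produce them.

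The paper avoids this transport problem entirely by working only inside $\Sigma(B)$ and exploiting the $B$-equivariance of $d$. Write $x,x'$ for the two faces of support $X$. Since the action of $B$ is orientation preserving, $x_\ast$ is a chain map, and $xb=x$ has rank $q-1$, so $x_\ast(b)=0$ in $C_q(\Sigma(B))$. Hence $0=d\,x_\ast(b)=x_\ast\,d(b)$. In the sum $x_\ast d(b)=\sum_{y\prec b}[b:y]\,xy$, every $y$ with $\sigma(y)\neq X$ has $\sigma(xy)=X\wedge\sigma(y)<X$, so $xy$ has rank at most $q-2$ and contributes $0$ in $C_{q-1}$; the remaining two terms give $[b:x]\,xx+[b:x']\,xx'=\bigl([b:x]+[b:x']\bigr)x$, using $xx'=x$ from the left regular band identity. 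Thus $[b:x]=-[b:x']$. This argument uses only the orientation-preserving action on $\Sigma(B)$ and never needs to compare with $\Sigma(B_{\geq X})$.
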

\begin{proof}
   Note that $b$ represents an edge of the CW poset $B_{\geq X}$, and so has two vertices, necessarily belonging to $X$ as this is the unique support covered by $Y$ in $B_{\geq X}$.  Thus $b$ covers two elements $x,x'$ with support $X$.  Notice that $x=xb=xx=xx'$, and so if $b$ has rank $q$, then the image of $b\in C_q(\Sigma(B))$ under $x_*$ is $0$. Also, if $y\prec b$ and $\sigma(y)\neq X$, then $xy$ has rank at most $q-2$. Thus $0=\partial x_\ast(b) = x_\ast \partial(b)=[b:x]x+[b:x']x$, and so $[b:x]=-[b:x']$.  
\end{proof}

We can now answer positively the question at the the end of page~98 of~\cite{ourmemoirs}.  

\begin{Thm}
    Let $B$ be a connected CW left regular band.  Let $Q$ be the Hasse diagram of $\Lambda(B)$ with the edges oriented upward.  Then $\mathbb ZB\cong \mathbb ZQ/I$ where $I$ is the admissible ideal generated by the sum of all paths of length $2$.  Hence, $k\otimes I$ is admissible and $kB\cong kB/(k\otimes I)$ for all commutative rings $k$.
\end{Thm}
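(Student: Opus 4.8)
The plan is to identify the admissible ideal produced by the general machinery with the ideal generated by the length-two path sums, and then to base change. First I would apply Theorem~\ref{t.main} together with the description of the quiver of a CW left regular band in \cite[Theorem~6.2]{ourmemoirs} to obtain a surjection $\psi\colon \mathbb ZQ\to \mathbb ZB$ as in Theorem~\ref{t:gabriel}, with $\psi(\varepsilon_X)=e_X$ and kernel an admissible ideal $I'$, where $Q$ is the Hasse diagram of $\Lambda(B)$ with edges oriented upward; for a cover $X\lessdot Y$ the single arrow $a_{X,Y}\colon X\to Y$ maps into the arrow space $e_Y(\mathcal J/\mathcal J^2)e_X$ whose basis is described in Lemma~\ref{l:spanning.set}. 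Since $Q$ is the Hasse diagram of a graded poset it is acyclic, so $\mathbb ZQ$ is free abelian of finite rank and $\mathbb ZQ/I$ is a finitely generated abelian group, where $I$ denotes the ideal generated by the sum $w$ of all paths of length two. The theorem then reduces to the equality $I=I'$.

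The inclusion $I\subseteq I'$ is the statement $\psi(w)=0$, for which I would use the transported differential $\partial=\Phi\circ d\circ\Phi\inv$ on $\mathbb ZB$, which satisfies $\partial^2=0$. By Lemma~\ref{l:opp.signs}, for a basis element $be_Y=\Phi(b)$ with $\sigma(b)=Y$ covering $X$, right multiplication by $e_X$ gives $\partial(be_Y)e_X=\pm(x-x')e_X$ for the two elements $x,x'$ of support $X$ covered by $b$, so that $e_Y\partial(be_Y)e_X$ lies in $e_Y\mathcal Je_X$ and, by Lemma~\ref{l:spanning.set}, represents the arrow $X\to Y$ modulo $\mathcal J^2$. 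Tracking these components through $\partial^2=0$ and projecting to the $X$--$Z$ corner by the orthogonal idempotents $e_Z,e_X$ should yield $e_Z\psi(w)e_X=\sum_{X\lessdot Y\lessdot Z}\psi(a_{Y,Z})\psi(a_{X,Y})=0$ for each corank-two pair $X<Z$, and summing over all such corners gives $\psi(w)=0$. Since the idempotents of \cite[Theorem~4.2]{ourmemoirs}, the orientation-preserving action of $B$ on $\Sigma(B)$, and $\partial$ are all defined over $\mathbb Z$, this reruns integrally the field computation behind \cite[Theorem~6.2]{ourmemoirs}. I expect this to be the main obstacle: turning $\partial^2=0$ into the exact length-two relations requires controlling the idempotent $e_Y$ interposed between the two arrow factors, so that the two consecutive arrows are represented consistently rather than by an ad hoc representative at each vertex, which is precisely why the explicit construction of the $e_X$ is invoked.

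For the reverse inclusion I would compare ranks rather than compute. The inclusion $I\subseteq I'$ gives a surjection $\mathbb ZQ/I\twoheadrightarrow \mathbb ZQ/I'\cong \mathbb ZB$. Because $I$ is generated by the single homogeneous element $w$ and $\mathbb ZQ$ is free abelian, right exactness of $-\otimes_{\mathbb Z}k$ gives $k\otimes(\mathbb ZQ/I)\cong kQ/I_k$, where $I_k$ is the length-two ideal over $k$ (the image of $k\otimes I$ in $kQ$); by \cite[Theorem~6.2]{ourmemoirs} this has $k$-dimension $|B|$ for every field $k$. A finitely generated abelian group $M$ with $\dim_{\mathbb F_p}(M\otimes \mathbb F_p)=|B|$ for all primes $p$ and $\dim_{\mathbb Q}(M\otimes \mathbb Q)=|B|$ is necessarily free of rank $|B|$, since the rational dimension fixes the free rank and the equalities modulo each $p$ rule out torsion. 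Hence $\mathbb ZQ/I$ is free abelian of rank $|B|$, so the displayed surjection onto the free abelian group $\mathbb ZB$ of the same finite rank is an isomorphism, whence $I=I'$ and, in particular, $I$ is admissible.

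Finally, the statement over an arbitrary commutative ring follows exactly as in Theorem~\ref{t:main2}: Proposition~\ref{p:admiss.ov.z} shows that $k\otimes I$ is an admissible ideal of $kQ$ and that $kQ/(k\otimes I)\cong k\otimes(\mathbb ZQ/I)\cong k\otimes\mathbb ZB\cong kB$ for every commutative ring $k$.
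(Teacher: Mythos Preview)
Your overall strategy matches the paper's: produce a surjection $\mathbb ZQ\to\mathbb ZB$ with admissible kernel, show the length-two sum $w$ lies in the kernel, use a comparison over all fields together with the structure of finitely generated abelian groups to conclude the kernel equals $I=(w)$, and then base change via Proposition~\ref{p:admiss.ov.z}. Your rank argument for the reverse inclusion (showing $\mathbb ZQ/I$ is free of rank $|B|$, so that the surjection onto $\mathbb ZB$ is an isomorphism) is a repackaging of the paper's, which instead shows directly that $\ker\Psi/I$ is finitely generated and divisible, hence zero.

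There is, however, a sign issue in your first step. The map $\psi$ of Theorem~\ref{t.main} sends $a_{X,Y}$ to $e_Y(t-f_Yf_X)e_X$, with $t$ the element of support $X$ covered by $f_Y$ other than $f_Yf_X$, whereas the differential gives $\partial(f_Ye_Y)e_X=e_Y(x-x')e_X$ with the ordering of $x,x'$ dictated by the incidence numbers (Lemma~\ref{l:opp.signs}); these agree only up to a sign that depends on the CW orientation and is not uniformly $+1$. Thus the relation extracted from $\partial^2=0$ lives in $\ker\Psi$ for the map $\Psi(a_{X,Y})=\partial(f_Ye_Y)e_X$, and what lands in $\ker\psi$ is in general only a \emph{signed} sum of length-two paths, which need not generate $I$. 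The paper handles this by switching to $\Psi$ at the outset and observing that, since $\Psi$ differs from $\psi$ only by arrow-wise signs, it is still surjective with admissible kernel; your argument then goes through verbatim with $\Psi$ in place of $\psi$. The paper also sidesteps the integral $\partial^2=0$ computation (and the interposed-$e_Y$ bookkeeping you rightly flag) by citing the rational result in~\cite{ourmemoirs} and using flatness of $\mathbb Q$ to pull $w\in\ker(1_{\mathbb Q}\otimes\Psi)$ back to $w\in\ker\Psi$.
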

\begin{proof}
    We observed already that $Q(B)=Q$.  If $X\prec Y$, then $\Gamma_{\mathscr R}(Y,X)$ consists of two isolated vertices, namely the two elements of $X$ covered by $f_Y$ (as per Lemma~\ref{l:opp.signs}), one of them being $f_Yf_X$. Since $B$ is a left regular band, $\Gamma_{\mathscr L}(X,Y)$ is connected for all $X>Y$.  Define $\Psi\colon \mathbb ZQ\to \mathbb ZB$ by sending $\varepsilon_X$ to $e_X$ and the unique edge $X\to Y$ for $X\prec Y$ to the element $\partial(f_Ye_Y)e_X$.  From Lemma~\ref{l:opp.signs} and the second displayed equation on page 98 of~\cite{ourmemoirs}, one has $0\neq \partial(f_Ye_Y)e_X = e_Y(x-x')e_X$ where $x,x'$ are the two elements with support $X$ covered by $f_Y$.  Exactly one of these will be  $f_Yf_X$, and so this homomorphism differs from $\psi$ in the proof of Theorem~\ref{t.main} only up to the sign the image of each path is given.  It follows that $\Psi$ is onto and $\ker\Psi$ is admissible.  

    Let $r$ be the sum of all paths of length $2$.  
    It is shown in the proof of~\cite[Thm~6.1 and Thm~6.2]{ourmemoirs}, that $r\in \ker 1_{\mathbb Q}\otimes \Psi$, and hence, since $\mathbb ZQ$ is free abelian and $\mathbb Q$ is flat, $r\in \ker \Psi$.  It is also shown in~\cite[Thm~6.1 and Thm~6.2]{ourmemoirs} that $k\otimes \ker \Psi$ is generated by $r$ for any field $k$.

    Note that since $Q$ is finite and acyclic, there are only finitely many directed paths in $Q$.  Therefore, the additive group of $\mathbb ZQ$ is finitely generated free abelian.
    Let $I=(r)$.  Then we have $0\to I\to \ker \Psi\to \ker\Psi/I\to 0$ is exact and $\ker \Psi$ is a finitely generated free abelian group, as $\mathbb ZQ$ is.  Therefore, $\ker \Psi/I$ is a finitely generated abelian group.
    
    Tensoring over a field $k$ gives the sequence 
    \begin{equation*}\label{eq:cokernel}
        k\otimes I\to k\otimes \ker \Psi\to k\otimes \ker\Psi/I\to 0
    \end{equation*}
    is exact.  The image of $k\otimes I$ is the ideal generated by $(r)$ in $kQ$.  But, as we already have observed, this is $k\otimes \ker \Psi$.  Therefore, $k\otimes \ker \Psi/I=0$ for every field $k$.  Taking $k$ to be the $p$-element field for a prime $p$, we see that $p(\ker \Psi/I)=\ker \Psi/I$ for every prime $p$.  But then, $\ker \Psi/I$ is a finitely generated divisible abelian group, and hence $\ker \Psi/I=0$.  Thus $\ker \Psi=I$.   

    It now follows that $I$ is admissible and $\mathbb ZQ/I\cong \mathbb ZB$.  Hence, by Proposition~\ref{p:admiss.ov.z}, the final statement holds.
\end{proof}

\bibliographystyle{abbrv}
\bibliography{standard2}

@Preamble{"\def\malce{\mathbin{\hbox{$\bigcirc$\rlap{\kern-7.75pt\raise0,50pt\hbox{${\tt m}$}}}}}"}

@Preamble{"\def\cprime{$'$} "}

@ARTICLE{MagHomology,
       author = {{Ivanov}, Sergei O. and {Mukoseev}, Lev},
        title = "{On diagonal digraphs, Koszul algebras and triangulations of homology spheres}",
      journal = {arXiv e-prints},
     keywords = {Mathematics - K-Theory and Homology, Mathematics - Algebraic Topology, Mathematics - Combinatorics, Mathematics - Representation Theory},
         year = 2024,
        month = may,
          eid = {arXiv:2405.04748},
        pages = {arXiv:2405.04748},
          doi = {10.48550/arXiv.2405.04748},
archivePrefix = {arXiv},
       eprint = {2405.04748},
 primaryClass = {math.KT},
       adsurl = {https://ui.adsabs.harvard.edu/abs/2024arXiv240504748I},
      adsnote = {Provided by the SAO/NASA Astrophysics Data System}
}

@Article{ BHR,
	AUTHOR = "Pat Bidigare and Phil Hanlon and Dan Rockmore",
	TITLE = "A combinatorial description of the spectrum for the {T}setlin library and its generalization to hyperplane arrangements",
	JOURNAL = "Duke Math. J.",
	FJOURNAL = "Duke Mathematical Journal",
	VOLUME = "99",
	YEAR = "1999",
	NUMBER = "1",
	PAGES = "135--174",
	ISSN = "0012-7094",
	CODEN = "DUMJAO",
	MRCLASS = "52C35 (15A18)",
	optMRNUMBER = "MR1700744 (2000m:52032)",
	MRREVIEWER = "Hiroaki Terao"
}

@InCollection{ Brown2,
	AUTHOR = "Kenneth S. Brown",
	TITLE = "Semigroup and ring theoretical methods in probability",
	BOOKTITLE = "Representations of finite dimensional algebras and related topics in Lie theory and geometry",
	SERIES = "Fields Inst. Commun.",
	VOLUME = "40",
	PAGES = "3--26",
	PUBLISHER = "Amer. Math. Soc.",
	ADDRESS = "Providence, RI",
	YEAR = "2004",
	MRCLASS = "60G50 (20M99 60B15 60J10)",
	optMRNUMBER = "MR2057147 (2005b:60118)",
	MRREVIEWER = "Michael Voit"
}

@Article{ Brown1,
	AUTHOR = "Kenneth S. Brown",
	TITLE = "Semigroups, rings, and {M}arkov chains",
	JOURNAL = "J. Theoret. Probab.",
	FJOURNAL = "Journal of Theoretical Probability",
	VOLUME = "13",
	YEAR = "2000",
	NUMBER = "3",
	PAGES = "871--938",
	ISSN = "0894-9840",
	CODEN = "JTPREO",
	MRCLASS = "60J10 (20M99 60B15)",
	optMRNUMBER = "MR1785534 (2001e:60141)"
}

@Article{ Green,
	AUTHOR = "J. A. Green",
	TITLE = "On the structure of semigroups",
	JOURNAL = "Ann. of Math. (2)",
	FJOURNAL = "Annals of Mathematics. Second Series",
	VOLUME = "54",
	YEAR = "1951",
	PAGES = "163--172",
	ISSN = "0003-486X",
	MRCLASS = "09.1X",
	optMRNUMBER = "MR0042380 (13,100d)",
	MRREVIEWER = "J. Riguet"
}

@Article{ Saliola,
	AUTHOR = "Franco V. Saliola",
	TITLE = "The quiver of the semigroup algebra of a left regular band",
	JOURNAL = "Internat. J. Algebra Comput.",
	FJOURNAL = "International Journal of Algebra and Computation",
	VOLUME = "17",
	YEAR = "2007",
	NUMBER = "8",
	PAGES = "1593--1610",
	ISSN = "0218-1967",
	MRCLASS = "20M25 (05B35 05E15 52C35)",
	optMRNUMBER = "MR2378054"
}

@article {Bongartz,
    AUTHOR = {Bongartz, Klaus},
     TITLE = {Algebras and quadratic forms},
   JOURNAL = {J. London Math. Soc. (2)},
  FJOURNAL = {Journal of the London Mathematical Society. Second Series},
    VOLUME = {28},
      YEAR = {1983},
    NUMBER = {3},
     PAGES = {461--469},
      ISSN = {0024-6107},
   MRCLASS = {16A64 (16A46)},
  MRNUMBER = {724715},
MRREVIEWER = {Sheila Brenner},
       DOI = {10.1112/jlms/s2-28.3.461},
       URL = {https://doi-org.ccny-proxy1.libr.ccny.cuny.edu/10.1112/jlms/s2-28.3.461},
}

@article {Saliolahyperplane,
    AUTHOR = {Saliola, Franco V.},
     TITLE = {The face semigroup algebra of a hyperplane arrangement},
   JOURNAL = {Canad. J. Math.},
  FJOURNAL = {Canadian Journal of Mathematics. Journal Canadien de
              Math\'ematiques},
    VOLUME = {61},
      YEAR = {2009},
    NUMBER = {4},
     PAGES = {904--929},
      ISSN = {0008-414X},
     CODEN = {CJMAAB},
   MRCLASS = {52C35 (05E18 16S36)},
  MRNUMBER = {2541389 (2010j:52079)},
MRREVIEWER = {Rafael Stekolshchik},
       DOI = {10.4153/CJM-2009-046-2},
       URL = {http://dx.doi.org.proxy.library.carleton.ca/10.4153/CJM-2009-046-2},
}

@article {ourmemoirs,
    AUTHOR = {Margolis, Stuart and Saliola, Franco V. and Steinberg,
              Benjamin},
     TITLE = {Cell complexes, poset topology and the representation theory
              of algebras arising in algebraic combinatorics and discrete
              geometry},
   JOURNAL = {Mem. Amer. Math. Soc.},
  FJOURNAL = {Memoirs of the American Mathematical Society},
    VOLUME = {274},
      YEAR = {2021},
    NUMBER = {1345},
     PAGES = {xi+135},
      ISSN = {0065-9266},
      ISBN = {978-1-4704-5042-7; 978-1-4704-6914-6},
   MRCLASS = {20M30 (05E10 16E10 16G10 16S37 20M25 52C35 52C40)},
  MRNUMBER = {4365944},
       DOI = {10.1090/memo/1345},
       URL = {https://doi-org.ccny-proxy1.libr.ccny.cuny.edu/10.1090/memo/1345},
}

@Book{ benson,
	AUTHOR = "D. J. Benson",
	TITLE = "Representations and cohomology. {I}",
	SERIES = "Cambridge Studies in Advanced Mathematics",
	VOLUME = "30",
	EDITION = "Second",
	NOTE = "Basic representation theory of finite groups and associative algebras",
	PUBLISHER = "Cambridge University Press",
	ADDRESS = "Cambridge",
	YEAR = "1998",
	PAGES = "xii+246",
	ISBN = "0-521-63653-1",
	MRCLASS = "20-02 (20Cxx 20J06)",
	optMRNUMBER = "MR1644252 (99f:20001a)"
}

@Book{ assem,
	AUTHOR = "Ibrahim Assem and Daniel Simson and Andrzej Skowro{\'n}ski",
	TITLE = "Elements of the representation theory of associative algebras. {V}ol. 1",
	SERIES = "London Mathematical Society Student Texts",
	VOLUME = "65",
	NOTE = "Techniques of representation theory",
	PUBLISHER = "Cambridge University Press",
	ADDRESS = "Cambridge",
	YEAR = "2006",
	PAGES = "x+458",
	ISBN = "978-0-521-58423-4; 978-0-521-58631-3; 0-521-58631-3",
	MRCLASS = "16G10 (16-02)",
	optMRNUMBER = "MR2197389 (2006j:16020)",
	MRREVIEWER = "P. Donovan"
}

@Article{ Burnsidealgebra,
	AUTHOR = "Louis Solomon",
	TITLE = "The {B}urnside algebra of a finite group",
	JOURNAL = "J. Combinatorial Theory",
    FJOURNAL = "Journal of Combinatorial Theory",
	VOLUME = "2",
	YEAR = "1967",
	PAGES = "603--615",
	MRCLASS = "20.80",
	MRNUMBER = "MR0214679 (35 \#5528)",
	MRREVIEWER = "R. Renne"
}

@Article{ DiaconisBrown1,
	AUTHOR = "Kenneth S. Brown and Persi Diaconis",
	TITLE = "Random walks and hyperplane arrangements",
	JOURNAL = "Ann. Probab.",
	FJOURNAL = "The Annals of Probability",
	VOLUME = "26",
	YEAR = "1998",
	NUMBER = "4",
	PAGES = "1813--1854",
	ISSN = "0091-1798",
	CODEN = "APBYAE",
	MRCLASS = "60J10 (52C35)",
	MRNUMBER = "MR1675083 (2000k:60138)"
}

@article {DO,
    AUTHOR = {Margolis, Stuart and Steinberg, Benjamin},
     TITLE = {Quivers of monoids with basic algebras},
   JOURNAL = {Compos. Math.},
  FJOURNAL = {Compositio Mathematica},
    VOLUME = {148},
      YEAR = {2012},
    NUMBER = {5},
     PAGES = {1516--1560},
      ISSN = {0010-437X},
   MRCLASS = {20M25 (16G10)},
  MRNUMBER = {2982439},
       DOI = {10.1112/S0010437X1200022X},
       URL = {http://dx.doi.org.proxy.library.carleton.ca/10.1112/S0010437X1200022X},
}

@incollection {Webb,
    AUTHOR = {Webb, Peter},
     TITLE = {An introduction to the representations and cohomology of
              categories},
 BOOKTITLE = {Group representation theory},
     PAGES = {149--173},
 PUBLISHER = {EPFL Press, Lausanne},
      YEAR = {2007},
   MRCLASS = {18G99 (16G10)},
  MRNUMBER = {MR2336640 (2008f:18013)},
MRREVIEWER = {Wolfgang Rump},
}

@article {Clifford,
    AUTHOR = {Clifford, A. H.},
     TITLE = {Semigroups admitting relative inverses},
   JOURNAL = {Ann. of Math. (2)},
  FJOURNAL = {Annals of Mathematics. Second Series},
    VOLUME = {42},
      YEAR = {1941},
     PAGES = {1037--1049},
      ISSN = {0003-486X},
   MRCLASS = {20.0X},
  MRNUMBER = {0005744 (3,199c)},
MRREVIEWER = {H. H. Campaigne},
}

@article {MSS,
    AUTHOR = {Margolis, Stuart and Saliola, Franco and Steinberg, Benjamin},
     TITLE = {Combinatorial topology and the global dimension of algebras
              arising in combinatorics},
   JOURNAL = {J. Eur. Math. Soc. (JEMS)},
  FJOURNAL = {Journal of the European Mathematical Society (JEMS)},
    VOLUME = {17},
      YEAR = {2015},
    NUMBER = {12},
     PAGES = {3037--3080},
      ISSN = {1435-9855},
   MRCLASS = {16E30 (05E15 16G10 20M25 52C35)},
  MRNUMBER = {3429159},
MRREVIEWER = {Kenneth A. Brown},
       DOI = {10.4171/JEMS/579},
       URL = {http://dx.doi.org/10.4171/JEMS/579},
}

@preamble{
   "\def\cprime{$'$} "
}

@book {LamBook,
    AUTHOR = {Lam, T. Y.},
     TITLE = {A first course in noncommutative rings},
    SERIES = {Graduate Texts in Mathematics},
    VOLUME = {131},
 PUBLISHER = {Springer-Verlag, New York},
      YEAR = {1991},
     PAGES = {xvi+397},
      ISBN = {0-387-97523-3},
   MRCLASS = {16-01},
  MRNUMBER = {1125071 (92f:16001)},
MRREVIEWER = {Gerald J. Janusz},
       DOI = {10.1007/978-1-4684-0406-7},
       URL = {http://dx.doi.org/10.1007/978-1-4684-0406-7},
}

\end{document}